\theoremstyle{plain}
\newtheorem{thm}{Theorem}[section]
\newtheorem{dfn}[thm]{Definition}
\newtheorem{lemma}[thm]{Lemma}
\newtheorem{prop}[thm]{Proposition}
\newtheorem{cor}[thm]{Corollary}
\newtheorem*{THM}{Main Theorem}
\newtheorem*{PROP}{Proposition}
\theoremstyle{remark}
\newtheorem{ex}[thm]{Example}
\newtheorem{rem}[thm]{Remark}
\newcommand{\mb}{\mathbb}
\newcommand{\R}{\mb R}
\newcommand{\C}{\mb C}
\newcommand{\Pj}{\mb P}
\newcommand{\Z}{\mb Z}
\newcommand{\Q}{\mb Q}
\newcommand{\N}{\mb N}
\newcommand{\rato}{\dashrightarrow}
\DeclareMathOperator{\codim}{codim}
\DeclareMathOperator{\Sym}{Sym}
\DeclareMathOperator{\Proj}{Proj}
\DeclareMathOperator{\Aut}{Aut}
\DeclareMathOperator{\Bir}{Bir}
\numberwithin{equation}{section}
\numberwithin{equation}{section}       
\title{On the primitivity of birational transformations of irreducible symplectic manifolds}
\author{Federico Lo Bianco}
\date{\today}
\begin{document}
\maketitle

\begin{abstract}
Let $f\colon X\rato X$ be a bimeromorphic transformation of a complex irreducible symplectic manifold $X$. Some important dynamical properties of $f$ are encoded by the induced linear automorphism $f^*$ of $H^2(X,\Z)$.
Our main result is that a bimeromorphic transformation such that $f^*$ has at least one eigenvalue with modulus $>1$ doesn't admit any invariant fibration (in particular its generic orbit is Zariski-dense).
\end{abstract}

\section{Introduction}
\label{intro}

A complex manifold is said \textbf{irreducible symplectic} if it is simply connected and the vector space of holomorphic $2$-forms is spanned by a nowhere degenerate form. Irreducible symplectic manifolds form, together with Calabi-Yau manifolds and complex tori, one of the three fundamental classes of K\"ahler manifolds with trivial canonical bundle. We are going to denote by $X$ an irreducible symplectic manifold and by $f\colon X\rato X$ a bimeromorphic transformation of $X$.\\
On the second cohomology of $X$ we can define a quadratic form, the Beauville-Bogomolov form, whose restriction to $H^{1,1}(X,\R)$ is hyperbolic (i.e. has signature $(1, h^{1,1}(X)-1)$) and which is preserved by the linear pull-back action $f^*$ induced by $f$ on cohomology; the setting is therefore similar to that of a compact complex surface, where the intersection form makes the second cohomology group into a hyperbolic lattice. In the surface case, the action of an automorphism $f\colon S\to S$ on cohomology translates into dynamical properties of $f$ (see Paragraphs \ref{isometries hyperbolic} and \ref{parabolic case} for details), and we can hope to have similar results in the irreducible symplectic case.

If $g\colon M\rato M$ is a meromorphic transformation of a compact K\"ahler manifold $M$, for $p=0,1,\ldots ,\dim (M)$ the $p$-th \textbf{dynamical degree} of $g$ is
$$\lambda_p(g):= \limsup_{n\to +\infty}||(g^n)_p^*||^{\frac 1n},$$
where $(g^n)_p^*\colon H^{p,p}(M)\to H^{p,p}(M)$ is the linear morphism induced by $g^n$ and $||\cdot ||$ is any norm on the space $End (H^{p,p}(M))$. Note that in the case of an automorphism, $\lambda_p(f)$ is just the maximal modulus of eigenvalues of $f^*_p$.

Let $g\colon M\rato M$ be a bimeromorphic transformation of a compact K\"ahler manifold. A meromorphic fibration $\pi\colon M\rato B$ onto a compact K\"ahler manifold $B$ such that $\dim B\neq 0, \dim X$ is called \textbf{$g$-invariant} if there exists a bimeromorphic transformation $h\colon B\rato B$ such that $\pi\circ g=h\circ \pi$.\\
\centerline{\xymatrix{
M \ar@{-->}[d]^\pi \ar@{-->}[r]^g & M \ar@{-->}[d]^\pi\\
B \ar@{-->}[r]^h & B
}}
The transformation $g$ is said to be \textbf{primitive} (see \cite{MR3431659}) if it admits no invariant fibration.

In the surface case, an automorphism whose action on cohomology has infinite order admits an invariant fibration onto a curve if and only if all the dynamical degrees are equal to $1$ (Theorem \ref{fibrations surfaces}). Our main result establish an analogue of the "only if" direction.


\begin{THM}
\label{thm A}
Let $X$ be an irreducible symplectic manifold, $f\colon X\rato X$ a bimeromorphic transformation with at least one dynamical degree $>1$. Then
\begin{enumerate}
\item $f$ is primitive;
\item $f$ admits at most $\dim(X)+b_2(X)-2$ periodic hypersurfaces;
\item the generic orbit of $f$ is Zariski-dense.
\end{enumerate}
\end{THM}

Here a hypersurface $H\subset X$ is said to be $f$-periodic if its strict transform $(f^n)^*H$ by some iterate of $f$ is equal to $H$.

\begin{rem}
Point $(2)$ follows from point $(1)$ and \cite[Theorem B]{MR2727612}; point $(3)$ follows from point $(1)$ and \cite[Theorem 4.1]{MR2400885}, but is proven here as a lemma (Lemma \ref{dense orbits}).
\end{rem}


In order to prove the Main Theorem, we establish a result on the dynamics of birational transformations of projective manifolds that has its own interest.


\begin{PROP}
\label{thm B}
Let $X,B$ be projective manifolds, $f\colon X\rato X, g\colon B\rato B$ birational transformations and $\pi\colon X\to B$ a non-trivial fibration such that $\pi\circ f=g\circ \pi$. If the generic orbit of $g$ is Zariski-dense and the generic fibre of $\pi$ is of general type, then
\begin{enumerate}
\item $\pi$ is isotrivial over an open dense subset $U\subset B$;
\item there exists an étale cover $U'\to U$ such that the induced fibration $X'=U'\times _U \pi^{-1}(U)$ is trivial: $X'\cong U'\times F$ for a fibre $F$;
\item the images by the natural morphism $X'\to \pi^{-1}(U)$ of the varieties $U'\times \{ pt\}$ are $f$-periodic; in particular the generic orbit of $f$ is not Zariski-dense.
\end{enumerate}
\end{PROP}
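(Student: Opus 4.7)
The plan is to first establish isotriviality of $\pi$ over a dense open set, then trivialize the family by a finite étale cover, and finally analyze how $f$ acts with respect to the trivialization. For (1), consider the moduli map $\mu\colon B\rato \mc M$ to the coarse moduli space of smooth projective varieties of general type with the appropriate Hilbert polynomial, defined on the dense open of $B$ over which $\pi$ is smooth. Because $\pi\circ f=g\circ\pi$, the transformation $f$ induces on the fibre over a generic $b\in B$ a birational map $\pi\inv(b)\rato \pi\inv(g(b))$; two smooth projective varieties of general type are birational if and only if they have isomorphic canonical models, so $\mu\circ g=\mu$. The fibres of $\mu$ are then $g$-invariant subvarieties of $B$, and the Zariski density of the generic orbit of $g$ forces $\mu$ to be constant on a dense open $U\subset B$, which is (1).

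For (2), the fibre $F$ being of general type has finite automorphism group $\Aut(F)$ (Matsusaka--Mumford). The isotrivial family $V:=\pi\inv(U)\to U$ is classified by an $\Aut(F)$-torsor on $U$, namely the Isom-scheme $\mathrm{Isom}_U(V,U\times F)$; the associated finite étale Galois cover $U'\to U$ trivializes it, i.e.\ $X':=V\times_U U'\cong U'\times F$. The natural morphism $X'\to V$ is then the quotient by the diagonal action of $\Aut(F)$ on $U'\times F$ (deck transformations on $U'$ and the natural action on $F$), giving (2).

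For (3), we lift $f|_V$ to a birational self-map $\tilde f\colon X'\rato X'$ over a lift $\tilde g\colon U'\rato U'$ of $g|_U$. Such a lift exists after replacing $f$ by a finite iterate: the commutation $\pi\circ f=g\circ\pi$ produces an isomorphism of $\Aut(F)$-torsors $g^*(U'\to U)\cong (U'\to U)$, and some iterate of the resulting permutation of the connected components of $U'$ is the identity. Under the splitting $X'\cong U'\times F$, the lift $\tilde f$ preserves the projection to $F$ up to a locally constant assignment $u'\mapsto \phi_{u'}\in \Aut(F)$ (using once more that birational maps between canonical models of general type are biregular); since $U'$ may be taken connected and $\Aut(F)$ is discrete, $\phi_{u'}=\phi$ is constant, so $\tilde f(u',p)=(\tilde g(u'),\phi(p))$. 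Finiteness of $\Aut(F)$ makes $\phi$ of finite order, so each horizontal subvariety $U'\times\{p\}$ is $\tilde f$-periodic, and consequently its image $Y_p\subset V$ is $f$-periodic. These $Y_p$ cover $V$, so a generic point of $X$ lies on some $Y_p$ and its $f$-orbit lies in a finite union of proper subvarieties, hence is not Zariski-dense.

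The main obstacle is the careful construction of the lift $\tilde f$ in step (3): one must control the indeterminacy loci of $f$ and $g$, verify that the $\Aut(F)$-torsor underlying $U'\to U$ is $g$-equivariant (which uses the full force of the isotriviality together with the commutation relation), and pass to a finite iterate of $f$ to secure a bona fide birational lift to $X'$. A secondary technical point arises in (1), where one should restrict to a dense open of $B$ on which $\pi$ is smooth with fibres of general type and fixed Hilbert polynomial, and shrink $U$ further if needed so that the various restrictions and moduli-theoretic statements make sense simultaneously for $g$ and its iterates.
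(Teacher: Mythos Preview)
Your overall strategy---isotriviality, then \'etale trivialization, then rigidity of the fibrewise action---matches the paper's. The implementations differ in two places, and there is one genuine omission you should address.

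\textbf{For (1).} The paper does not invoke a coarse moduli space. Instead it first replaces $X$ by the relative pluricanonical image $\overline{\phi(X)}\subset Y=\Proj(\pi_*(mK_{X/B})^\vee)$, then fixes a fibre $F$ over a point $b_0$ with Zariski-dense $g$-orbit and studies the Isom scheme $\mathfrak I=\mathfrak{Isom}_B(X,F\times B)$ inside the relative Hilbert scheme. The point is that $f$ extends to a transformation $\tilde f$ of $Y$ acting \emph{linearly} on each fibre $\Pj H^0(X_b,mK_{X_b})^\vee$; hence it preserves the degree of the restriction of a fixed relatively very ample $H_Y$, so the graphs of the resulting isomorphisms $X_{g^n(b_0)}\cong F$ all share a single Hilbert polynomial $P$. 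The image of $\mathfrak I^P\to B$ then contains the $g$-orbit of $b_0$, is constructible by Chevalley, and so contains a dense open set. Your moduli-map argument is a legitimate alternative and is conceptually tidier, but you should cite the construction of the relevant coarse moduli (Viehweg/Koll\'ar for canonically polarized manifolds).

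\textbf{For (3).} The paper does \emph{not} lift $f$ to $X'$. It works directly with the $\Aut(F)$-saturated horizontals $W^x=U'\times(G\cdot x)$, showing their images in $\pi^{-1}(U)$ are $f$-invariant via a local-trivialization argument: writing $f(x,y)=(g(x),h(x,y))$ on a Euclidean chart, the map $x\mapsto h(x,\cdot)\in\Bir(F)$ is locally constant because $\Bir(F)$ is finite. The images of the individual slices $U'\times\{y\}$ are then $f$-periodic as components of these invariant sets. Your route---lift $g$ to $U'$ after an iterate and show $\tilde f(u',p)=(\tilde g(u'),\phi(p))$ with $\phi$ constant---also works, but requires exactly the torsor-equivariance and passage-to-iterate bookkeeping you flag; the paper's $G$-orbit trick sidesteps this entirely.

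\textbf{The omission.} You must, as the paper does, replace $\pi$ by the relative pluricanonical model before running the argument. As written, your moduli map in (1) is not well-posed: the fibres of $\pi$ need not have ample canonical bundle, so they do not define points of the moduli of canonically polarized manifolds; and the relation $\pi\circ f=g\circ\pi$ only gives \emph{birational} maps $\pi^{-1}(b)\rato\pi^{-1}(g(b))$, which does not imply $\mu(b)=\mu(g(b))$ for the moduli-of-isomorphism-classes map you describe. Your sentence ``birational iff isomorphic canonical models, so $\mu\circ g=\mu$'' is a non sequitur unless $\mu$ already records the canonical model---i.e., unless you have passed to the relative canonical/pluricanonical model. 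After that replacement the fibres are $m$-canonical images, $f$ acts fibrewise by honest isomorphisms (indeed by restrictions of linear automorphisms of the ambient $\Pj^N$), and both your moduli argument and your Isom-scheme trivialization in (2) go through. Your ``secondary technical point'' paragraph does not cover this; it should.
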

\begin{rem}
Point $(1)$ is equivalent to point $(2)$ by \cite[Proposition 2.6.10]{MR2247603}.
\end{rem}

In Section \ref{dyn deg} we recall the definition and main results about dynamical degrees, in the absolute and relative context; Section \ref{sect HK} is consecrated to irreducible symplectic manifolds, with a focus on the invariance of the Beauville-Bogomolov form under the action of a birational transformation; in Section \ref{proof main results} and \ref{sec: invariant subvarieties} we prove the Main Theorem and the Proposition above; Section \ref{alternative approach} presents a different approach to the proof of the Main Theorem, which allows to prove a slightly weaker version of it.

\subsection{Acknowledgements}
I would like to thank Serge Cantat for proposing the topic and strategy of this paper, and for his precious help in all the stages of its redaction; I am also grateful to Mathieu Romagny for the fruitful conversations about Hilbert schemes.

\section{Dynamical degrees}
\label{dyn deg}

Throughout this section $M$ will be a compact K\"ahler manifold of dimension $d$.

\subsection{Definition and entropy}

Let $f\colon X\rato Y$ be a dominant meromorphic map between compact K\"ahler manifolds; the map $f$ is then holomorphic outside its indeterminacy locus $\mathcal I\subset X$, which has codimension at least $2$. The closure $\Gamma $ of its graph over $X\setminus \mathcal I$ is an irreducible analytic subset of dimension  $d$ in $X\times Y$. Let $\pi_X,\pi_Y$ denote the restrictions to $\Gamma$ of the projections from $X\times Y$ to $X$ and to $Y$ respectively; then $\pi_X$ induces a biholomorphism $\pi_X^{-1}(X\setminus \mathcal I)\cong X\setminus \mathcal I$ and we can identify $f$ with $\pi_Y\circ \pi_X^{-1}$.\\
Let $\alpha$ be a smooth $(p,q)$-form on $Y$; we define the pull-back of $\alpha$ by $f$ as the $(p,q)$-current (see \cite{demailly1997complex} for the basic theory of currents) on $X$
$$f^*\alpha := (\pi_X)_*(\pi_Y^*\alpha).$$
It is not difficult to see that if $\alpha$ is  closed (resp. positive), then so is $f^* \alpha$, so that $f$ induces a linear morphism between the Hodge cohomology groups. This definition of pull-back coincides with the usual one when $f$ is holomorphic.

Remember that the $p$-th dynamical degree of a dominant meromorphic map $f\colon M\rato M$ are defined as
$$\lambda_p(f)= \limsup_{n\to +\infty}||(f^n)_p^*||^{\frac 1n}.$$
Thanks to the above definition of pull-back, one can prove that
$$\lambda_p(f)=\lim_{n\to +\infty} \left( \int_M (f^n)^*\omega^p \wedge \omega^{d-p}\right)^{\frac 1n}$$
for any K\"ahler form $\omega$. See \cite{MR2119243}, \cite{MR2920277} for details.

The $p$-th dynamical degree measures the exponential growth of the volume of $f^n(V)$ for subvarieties $V\subset M$ of dimension $p$ \cite{MR2139697}.

\begin{rem}
By definition $\lambda_0(f)=1$; $\lambda_d(f)$ coincides with the topological degree of $f$: it is equal to the number of points in a generic fibre of $f$.
\end{rem}

\begin{rem}
Let $f$ be an automorphism. Then we have $(f^n)^*=(f^*)^n$, so that $\lambda_p(f)$ is the maximal modulus of eigenvalues of the linear automorphism $f^*_p\colon H^{p,p}(M,\R)\to H^{p,p}(M,\R)$; since $f^*$ also preserves the positive cone $\mathcal K_p\subset H^{p,p}(M,\R)$, a theorem of Birkhoff \cite{MR0214605} implies that $\lambda_p(f)$ is a positive real eigenvalue of $f^*_p$.\\
It should be noted however that in the bimeromorphic setting we have in general $(f^n)^*\neq (f^*)^n$.
\end{rem}

\begin{rem}
\label{inverse degrees}
If $f$ is bimeromorphic we have
$$\lambda_p(f)=\lambda_{d-p}(f^{-1}).$$
Indeed, for $f$ biregular we have
$$\int_M (f^n)^*\omega^p \wedge \omega^{d-p}=\int_M (f^{-n})^*(f^n)^*\omega^p \wedge (f^{-n})^*\omega^{d-p}=\int_M \omega^p\wedge (f^{-n})^*\omega^{d-p},$$
which proves the equality by taking the limit.\\
If $f$ is only bimeromorphic, for all $n$ we can find two dense open subsets $U_n,V_n\subset M$ such that $f^n$ induces an isomorphism $U_n\cong V_n$; by the definition of pull-back the measures $(f^n)^*\omega^p \wedge \omega^{d-p}$ and $\omega^p\wedge (f^{-n})^*\omega^{d-p}$ have no mass on any proper closed analytic subset, so that
$$\int_M (f^n)^*\omega^p \wedge \omega^{d-p}=\int_{U_n} (f^n)^*\omega^p \wedge \omega^{d-p}=\int_{V_n} \omega^p\wedge (f^{-n})^*\omega^{d-p}=\int_M \omega^p\wedge (f^{-n})^*\omega^{d-p},$$
which proves the equality in the bimeromorphic case as well.
\end{rem}

The main interest in the definition of dynamical degrees lies in the following theorem by Yomdin and Gromov \cite{MR1095529}.

\begin{thm}
If $f\colon M\to M$ is an automorphism, then the topological entropy of $f$ is given by
$$h_{top}(f)=\max_{p=0,\ldots, d} \log\lambda_p(f).$$
\end{thm}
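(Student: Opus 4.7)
The plan is to split this equality into two matching inequalities, each of independent origin: the upper bound $h_{top}(f)\leq \max_p\log\lambda_p(f)$ is due to Gromov, while the matching lower bound is Yomdin's theorem. The proof therefore combines two rather different sets of techniques.

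For the upper bound I would follow Gromov's graph method. Consider the iterated graph $\Gamma_n\subset M^{n+1}$, that is, the image of $x\mapsto (x,f(x),\ldots ,f^n(x))$, and endow $M^{n+1}$ with the K\"ahler form $\Omega_n=\sum_{k=0}^n \pi_k^*\omega$ associated with a fixed K\"ahler form $\omega$ on $M$. A Bowen-type covering argument (Gromov's lemma) yields
$$h_{top}(f)\leq \limsup_{n\to +\infty}\frac{1}{n}\log \int_{\Gamma_n}\Omega_n^d.$$
Expanding $\Omega_n^d$ multinomially and using the biholomorphism $\pi_0\colon \Gamma_n\cong M$, the right-hand side becomes a sum, of polynomial size in $n$, of mixed integrals of the form $\int_M (f^{k_1})^*\omega \wedge \cdots \wedge (f^{k_d})^*\omega$. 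A standard comparison for positive closed $(1,1)$-forms, together with the fact that $f$ is an automorphism so that every $(f^k)^*\omega$ is K\"ahler, shows that each such mixed integral is dominated (up to constants) by the pure ones $\int_M (f^n)^*\omega^p\wedge \omega^{d-p}$; their exponential growth rate is exactly $\lambda_p(f)$ by the formula recalled before the theorem, and we conclude $h_{top}(f)\leq \max_p\log\lambda_p(f)$.

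The reverse inequality is considerably more subtle. The strategy is, for each $p$, to exhibit a smooth compact $p$-dimensional submanifold $V\subset M$ whose iterates have volume growth rate $\lambda_p(f)$; its existence follows, via a slicing argument, from the fact that $\lambda_p(f)$ is the exponential growth rate of $\int_M (f^n)^*\omega^p\wedge \omega^{d-p}$, which can be read as an average of volumes of $p$-cycles in generic $(d-p)$-parameter families. One then invokes Yomdin's theorem: for any smooth compact submanifold $V$ of $M$, the exponential growth rate of the volume of $f^n(V)$ is bounded above by $h_{top}(f)$. Yomdin's theorem itself, whose proof rests on his semialgebraic $C^k$-parametrization lemma to control the metric complexity of iterates of $f$ and thereby count $(n,\varepsilon)$-separated sets, is the hard core of the argument and the main obstacle here; once granted, taking the maximum over $p$ completes the proof.
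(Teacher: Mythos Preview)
The paper does not actually prove this theorem: it is stated as a result of Yomdin and Gromov and attributed to \cite{MR1095529}, with no argument given. So there is no ``paper's own proof'' to compare against.

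That said, your outline is the standard Gromov--Yomdin argument and is essentially correct. The Gromov upper bound via the iterated graph $\Gamma_n\subset M^{n+1}$ and the multinomial expansion of $\Omega_n^d$ is exactly the right picture; the point that for an automorphism every $(f^k)^*\omega$ is genuinely K\"ahler, so the mixed integrals can be compared to the pure ones, is the key simplification over the general meromorphic case. For the lower bound, your description is accurate in spirit, though one small caution: the submanifold $V$ with the required volume growth need not be a complex submanifold, and the slicing argument produces real smooth chains rather than algebraic cycles; Yomdin's theorem applies in this $C^\infty$ generality, which is what makes the argument go through. You correctly identify the semialgebraic reparametrization lemma as the hard core of Yomdin's proof.
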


The topological entropy is a positive real number which measures the disorder created by iterations of $f$.\\
It is also possible to give a definition of topological entropy in the bimeromorphic context (see \cite{MR2391122}), but in this situation we only have
$$h_{top}(f)\leq \max_{p=0,\ldots, d} \log\lambda_p(f).$$

\subsection{Relative setting}

Dinh, Nguy\^en and Truong have studied the behaviour of dynamical degrees in the relative setting (\cite{MR2851870} and \cite{MR2989646}). Throughout this paragraph we denote by $f\colon M\rato M$ a meromorphic transformation of a compact K\"ahler manifold $M$ of dimension $d$, by $\pi\colon M\rato B$ a meromorphic fibration onto a compact K\"ahler manifold $B$ of dimension $k$ and by $g\colon B\rato B$ a meromorphic transformation such that
$$g\circ \pi=\pi\circ f.$$

The $p$-th \textbf{relative dynamical degree} of $f$ is defined as
$$\lambda_p(f|\pi)=\limsup_{n\to +\infty}\left(  \int_M (f^n)^*\omega_M^p\wedge \pi^*\omega_B^k \wedge \omega_M^{d-p-k}  \right)^{\frac 1n},$$
where $\omega_M$ and $\omega_B$ are arbitrary K\"ahler forms on $M$ and $B$ respectively. In particular $\lambda_p(f|\pi)=0$ for $p>d-k$.

Roughly speaking, $\lambda_p(f|\pi)$ gives the exponential growth of $(f^n)^*$ acting on the subspace of classes in $H^{p+k,p+k}(M,\R)$ that can be supported on a generic fibre of $\pi$; if $M$ is projective, it also represents the growth of the volume of $f^n(V)$ for $p$-dimensional subvarieties $V\subset \pi^{-1}(b)$ of a generic fibre of $\pi$.

\begin{rem}
\label{bimeromorphic invariant}
Dynamical degrees and relative dynamical degrees are bimeromorphic invariants \cite{MR2851870}. In other words, if there exist bimeromorphic maps $\phi\colon M\rato M'$, $\psi\colon B\rato B'$ and a meromorphic fibration $\pi'\colon M'\rato B'$ such that $\pi'\circ \phi=\psi\circ \pi$, then
$$\lambda_p(f)=\lambda_p(\phi\circ f\circ \phi^{-1}),\qquad \lambda_q(f|\pi)=\lambda_q(\phi\circ f\circ \phi^{-1}|\pi').$$
\end{rem}

\begin{rem}
\label{relative=fibre}
If $F=g^{-1}(b)$ is a regular, $f$-invariant, non-multiple fibre, then $\lambda_p(f|\pi)=\lambda_p(f_{|F})$ for all $p$ (see \cite{MR2851870}).
\end{rem}

The following theorem is due to Dinh, Nguy\^en and Truong \cite{MR2851870}.

\begin{thm}
\label{dinh nguyen}
Let $M$ be a compact K\"ahler manifold, $f\colon M\rato M$ a meromorphic transformation, $\pi\colon M\rato B$ a meromorphic fibration and $g\colon B\rato B$ a meromorphic transformation such that $\pi\circ f=g\circ \pi$. Then for all $p=0,\ldots \dim(M)$
$$\lambda_p(f)=\max_{q+r=p}\lambda_q(f|\pi)\lambda_r(g).$$
\end{thm}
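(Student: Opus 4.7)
The plan is to prove the two inequalities $\lambda_p(f)\geq \lambda_q(f|\pi)\lambda_r(g)$ for every $q+r=p$, and $\lambda_p(f)\leq \max_{q+r=p}\lambda_q(f|\pi)\lambda_r(g)$, separately. By the bimeromorphic invariance of (relative) dynamical degrees stated in Remark \ref{bimeromorphic invariant}, I first replace $M,B,\pi,f,g$ by bimeromorphic conjugates so that $\pi$ is a holomorphic surjection and so that the graphs of $f,g$ are resolved. Fix K\"ahler forms $\omega_M,\omega_B$. The relation $\pi\circ f=g\circ \pi$ then yields the crucial identity $(f^n)^*\pi^*\alpha=\pi^*(g^n)^*\alpha$ for every smooth form $\alpha$ on $B$.

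For the lower bound, fix $q+r=p$ and pick $c>0$ with $\omega_M\geq c\,\pi^*\omega_B$. Since these are positive $(1,1)$-forms we obtain $\omega_M^s\geq c^t\,\omega_M^{s-t}\wedge \pi^*\omega_B^t$ for all $0\leq t\leq s$. Applying this to both $\omega_M^p$ and $\omega_M^{d-p}$ and then pulling back by $f^n$ yields the bound
$$\int_M(f^n)^*\omega_M^p\wedge \omega_M^{d-p}\geq c^k\int_M(f^n)^*\omega_M^q\wedge \pi^*(g^n)^*\omega_B^r\wedge \omega_M^{d-q-k}\wedge \pi^*\omega_B^{k-r}.$$
The projection formula rewrites the right-hand side as the pairing on $B$ of $(g^n)^*\omega_B^r\wedge \omega_B^{k-r}$ with the positive pushforward current $\mu_n:=\pi_*\!\left[(f^n)^*\omega_M^q\wedge \omega_M^{d-q-k}\right]$. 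The total mass of $\mu_n$ against $\omega_B^k$ equals $\int_M(f^n)^*\omega_M^q\wedge \omega_M^{d-q-k}\wedge \pi^*\omega_B^k$ and therefore grows like $\lambda_q(f|\pi)^n$, while $\int_B(g^n)^*\omega_B^r\wedge \omega_B^{k-r}$ grows like $\lambda_r(g)^n$; extracting the product growth from this pairing via a Cauchy--Schwarz or concentration estimate and then taking $n$-th roots completes the lower bound.

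For the upper bound one uses the dual strategy: decompose $\omega_M^p$ into a sum of mixed forms $\omega_M^q\wedge \pi^*\omega_B^r$ with $q+r=p$, via the multinomial expansion of $\omega_M=(\omega_M-\varepsilon\pi^*\omega_B)+\varepsilon\pi^*\omega_B$ with $\varepsilon>0$ small enough that $\omega_M-\varepsilon\pi^*\omega_B$ remains K\"ahler. Applying $(f^n)^*$ and invoking once more the identity $(f^n)^*\pi^*=\pi^*(g^n)^*$, the integral defining $\lambda_p(f)$ is dominated by a finite sum indexed by $q+r=p$, each term factoring into a relative integral on $M$ controlled by $\lambda_q(f|\pi)^n$ and an absolute integral on $B$ controlled by $\lambda_r(g)^n$; this gives the upper bound after taking $n$-th roots.

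The main obstacle is analytic. Because $f$ is only meromorphic, pullback by $f^n$ does not commute with wedge products: the objects $(f^n)^*(\omega_M^q\wedge \pi^*\omega_B^r)$ must be defined as positive closed currents via a resolution of the graph of $f^n$, and the heuristic identity $(f^n)^*(\alpha\wedge \beta)\approx (f^n)^*\alpha\wedge (f^n)^*\beta$ is only valid up to subexponential corrections that must be controlled uniformly in $n$. Justifying both the concentration estimate used in the lower bound and the factorization used in the upper bound at the level of positive closed currents requires the regularization theorems of Demailly together with the Cauchy--Schwarz-type estimates for mixed Monge--Amp\`ere masses developed by Dinh and Sibony; this is the technical heart of the Dinh--Nguy\^en--Truong argument, and the step I would not try to bypass.
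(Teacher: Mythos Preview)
The paper does not prove this theorem: it is quoted from Dinh--Nguy\^en--Truong \cite{MR2851870} without proof, so there is no argument in the paper to compare your proposal against.

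For what it is worth, your sketch follows the broad shape of the original Dinh--Nguy\^en--Truong argument (mixed degrees, comparison of K\"ahler forms, pushing forward by $\pi$ to separate the base and fibre contributions), and you correctly identify the genuine difficulty: controlling the failure of $(f^n)^*$ to commute with wedge products up to subexponential error. One point to flag is that your lower-bound step, as written, is not self-contained: extracting the product growth $\lambda_q(f|\pi)^n\lambda_r(g)^n$ from the pairing $\int_B (g^n)^*\omega_B^r\wedge \omega_B^{k-r}\wedge \mu_n$ is not a matter of Cauchy--Schwarz alone, since $\mu_n$ varies with $n$ and one needs a uniform comparison of $\mu_n$ with a fixed K\"ahler class on $B$; this is precisely where the regularization and mass estimates you allude to are required, and not only in the upper bound.
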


\subsection{Log-concavity}

Dynamical degrees and their relative counterparts enjoy a log-concavity property (see \cite{khovanskii1979geometry},\cite{MR524795}, \cite{MR1095529}, \cite{MR2920277} for the original result, \cite{MR2851870} for the relative setting).

\begin{prop}
\label{log-concavity}
If $f\colon M\rato M$ is a meromorphic dominant map, the sequence $p\mapsto \log \lambda_p(f)$ is concave on the set $\{0,1,\ldots , d\}$; in other words
$$\lambda_p(f)^2\geq \lambda_{p-1}(f)\lambda_{p+1}(f) \qquad \text{for }p=1,\ldots, d-1.$$
Analogously, if $\pi\colon M\rato B$ is an $f$-invariant meromorphic fibration, then the sequence $p\mapsto \log \lambda_p(f|\pi)$ is concave on the set $\{0,1,\ldots , \dim(M)-\dim(B)\}$.
\end{prop}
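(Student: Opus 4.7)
The strategy is to reduce both statements to the Khovanskii-Teissier inequality. Recall that for any nef classes $\alpha,\beta$ (and, more generally, any auxiliary nef classes $\gamma_1,\ldots ,\gamma_m$) on a compact K\"ahler manifold $N$ of dimension $d$, the generalized Khovanskii-Teissier inequality gives
\[
\bigl(\alpha^p\cdot \beta^{d-m-p}\cdot \gamma_1\cdots \gamma_m\bigr)^2 \;\geq\; \bigl(\alpha^{p-1}\cdot \beta^{d-m-p+1}\cdot \gamma_1\cdots \gamma_m\bigr)\bigl(\alpha^{p+1}\cdot \beta^{d-m-p-1}\cdot \gamma_1\cdots \gamma_m\bigr),
\]
valid for $1\le p\le d-m-1$. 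In the K\"ahler category this is a consequence of the mixed Hodge-Riemann bilinear relations.

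For the absolute statement, fix a K\"ahler form $\omega$ on $M$ and let $\Gamma_n$ be a smooth resolution of the graph of $f^n$, with the two projections $\pi_1,\pi_2\colon \Gamma_n\to M$; set $\alpha := \pi_2^*[\omega]$ and $\beta := \pi_1^*[\omega]$, which are both nef on $\Gamma_n$. By the definition of the pullback of a form by a meromorphic map recalled at the beginning of Section \ref{dyn deg},
\[
\int_M (f^n)^*\omega^p\wedge \omega^{d-p} \;=\; \int_{\Gamma_n} \pi_2^*\omega^p\wedge \pi_1^*\omega^{d-p} \;=\; \alpha^p\cdot \beta^{d-p}.
\]
Applying Khovanskii-Teissier on $\Gamma_n$ to $\alpha$ and $\beta$ yields
\[
\Bigl(\int_M (f^n)^*\omega^p\wedge \omega^{d-p}\Bigr)^{\!2} \;\geq\; \Bigl(\int_M (f^n)^*\omega^{p-1}\wedge \omega^{d-p+1}\Bigr)\Bigl(\int_M (f^n)^*\omega^{p+1}\wedge \omega^{d-p-1}\Bigr),
\]
and taking $(1/n)$-th roots and passing to the limit (which exists, being the dynamical degree) gives $\lambda_p(f)^2\geq \lambda_{p-1}(f)\lambda_{p+1}(f)$.

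For the relative version, the very same argument applies on $\Gamma_n$ with the same $\alpha,\beta$ and the $k:=\dim B$ auxiliary nef classes $\gamma_1=\cdots=\gamma_k:=(\pi\circ \pi_1)^*[\omega_B]$; the integrals appearing in the definition of $\lambda_p(f|\pi)$ are once more intersection numbers on $\Gamma_n$, and the mixed Khovanskii-Teissier inequality immediately produces the log-concavity of $p\mapsto \log \lambda_p(f|\pi)$ on $\{0,\ldots ,\dim(M)-\dim(B)\}$. The principal obstacle is the generalized Khovanskii-Teissier inequality itself, whose proof in the K\"ahler setting rests on the mixed Hodge-Riemann bilinear relations; granted this analytic input, the derivation above is purely formal and compatible with the passage to the limit in $n$.
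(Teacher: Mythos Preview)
The paper does not actually prove this proposition; it only records it with citations to the original sources (Khovanskii, Teissier, Gromov, Dinh--Sibony for the absolute case, and Dinh--Nguy\^en for the relative case). Your argument---pulling everything back to a desingularized graph of $f^n$, applying the (generalized) Khovanskii--Teissier inequality to the nef classes $\pi_1^*[\omega]$, $\pi_2^*[\omega]$, and then passing to the limit---is precisely the standard proof contained in those references, so your proposal is correct and in line with the cited approach.

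One small technical caveat in the relative case: when $\pi$ is only meromorphic, the composition $\pi\circ\pi_1$ is meromorphic as well, and the class $(\pi\circ\pi_1)^*[\omega_B]$ is a priori only pseudoeffective rather than nef on $\Gamma_n$. To make your invocation of the mixed Khovanskii--Teissier inequality literally valid, you should choose $\Gamma_n$ so that it also resolves the indeterminacies of $\pi\circ\pi_1$ (equivalently, work on a common resolution dominating both the graph of $f^n$ and a resolution of $\pi$); then the auxiliary classes are honest pullbacks of K\"ahler classes by holomorphic maps, hence nef. With that adjustment the argument goes through exactly as you wrote.
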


As a consequence we have $\lambda_p\geq 1$ for all $p=0,\ldots, d$; furthermore, there exist $0\leq p\leq p+q\leq d$ such that
\begin{equation}
1=\lambda_0(f)<\cdots < \lambda_p(f)=\lambda_{p+1}(f)=\cdots=\lambda_{p+q}(f)>\cdots > \lambda_d(f).
\end{equation}

\section{Irreducible symplectic manifolds}
\label{sect HK}

We give here the basic notions and properties of irreducible symplectic manifolds (see \cite{MR1963559}, \cite{MR2964480} for details).

\begin{rem}
Because of the non-degeneracy of $\sigma$, one can easily prove that an irreducible symplectic manifold has even complex dimension.
\end{rem}

Throughout this section $X$ denotes an irreducible symplectic manifold of dimension $2n$ and $\sigma$ a non-degenerate holomorphic two-form on $X$.

Here is a list of the known examples of such manifolds that are not deformation equivalent.

\begin{enumerate}
\item Let $S$ be a $K3$ surface, i.e. a simply connected K\"ahler surface with trivial canonical bundle. Then the Hilbert scheme $S^{[n]}=Hilb^n(S)$, parametrizing $0$-dimensional subschemes of $S$ of length $n$, is a $2n$-dimensional irreducible symplectic manifold.
\item Let $T$ be a complex torus of dimension $2$, let $\phi\colon Hilb^n(T)\to \Sym^n(T)$ be the natural morphism and let $s\colon \Sym^n(T)\to T$ be the sum morphism. Then the kernel $K_{n-1}(T)$ of the composition $s\circ \phi$ is an irreducible symplectic manifold of dimension $2n-2$, which is called a \emph{generalized Kummer variety}.
\item O'Grady has found two sporadic examples of irreducible symplectic manifolds of dimension $6$ and $10$.
\end{enumerate}

An irreducible symplectic manifold is said of type $K3^{[n]}$ (respectively of type generalized Kummer) if it is deformation equivalent to $Hilb^n(S)$ for some $K3$ surface $S$ (respectively to $K_{n-1}(T)$ for some two-dimensional complex torus $T$).

\subsection{The Beauville-Bogomolov form}

We can define a natural quadratic form on the second cohomology $H^2(X,\R)$ which enjoys similar properties to the intersection form on compact surfaces; for details and proofs see \cite{MR1963559}.

\begin{dfn}
Let $\sigma$ be a holomorphic two-form such that $\int(\sigma\bar \sigma)^n=1$. The \emph{Beauville-Bogomolov quadratic form $q_{BB}$} on $H^2(X,\R)$ is defined by
$$q_{BB}(\alpha)=\frac n2 \int_X \alpha^2(\sigma\bar \sigma)^{n-1}+(1-n)\left(\int_X \alpha\sigma^n\bar\sigma^{n-1}\right)\left(\int_X \alpha\sigma^{n-1}\bar \sigma^n\right).$$
\end{dfn}

The Beauville-Bogomolov form satisfies two important properties: first the Beauville-Fujiki relation, saying that there exists a constant $c>0$ such that
$$q_{BB}(\alpha)^n=c\int_X \alpha^{2n} \qquad \text{for all }\alpha\in H^2(X,\R).$$
In particular, some multiple of $q_{BB}$ is defined over $\Z$.\\
Second, the next Proposition describes completely the signature of the form.

\begin{prop}
\label{signature BB}
The Beauville-Bogomolov form has signature $(3,b_2(X)-3)$ on $H^2(X,\R)$.\\
More precisely, the decomposition $H^2(X,\R)=H^{1,1}(X,\R)\oplus \left(H^{2,0}(X)\oplus H^{0,2}(X)\right)_\R$ is orthogonal with respect to $q_{BB}$, and $q_{BB}$ has signature $(1,h^{1,1}(X)-1)$ on $H^{1,1}(X,\R)$ and is positive definite on $\left(H^{2,0}(X)\oplus H^{0,2}(X)\right)_\R$.
\end{prop}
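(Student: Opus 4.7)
The plan is to prove orthogonality of the given decomposition of $H^2(X,\R)$ by type-counting, to compute $q_{BB}$ directly on $(H^{2,0}\oplus H^{0,2})_\R$, and then to transfer the signature question on $H^{1,1}(X,\R)$ to a Hodge--Riemann statement via a polarization of the Beauville--Fujiki relation. For orthogonality, take $\alpha\in H^{1,1}(X,\R)$ and $\beta=\lambda\sigma+\bar\lambda\bar\sigma\in (H^{2,0}\oplus H^{0,2})_\R$: all three integrals that appear in the polarized bilinear form $b(\alpha,\beta)$ have integrands of Hodge type $(2n+1,2n-1)$ or $(2n-1,2n+1)$, and hence vanish on the compact complex manifold $X$ of dimension $2n$; thus $b(\alpha,\beta)=0$. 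For the restriction to $(H^{2,0}\oplus H^{0,2})_\R$, expanding $\beta^2=2|\lambda|^2\sigma\bar\sigma+\lambda^2\sigma^2+\bar\lambda^2\bar\sigma^2$, discarding the outer pieces by type, and using $\int(\sigma\bar\sigma)^n=1$, one finds $\int\beta^2(\sigma\bar\sigma)^{n-1}=2|\lambda|^2$, $\int\beta\sigma^n\bar\sigma^{n-1}=\bar\lambda$, $\int\beta\sigma^{n-1}\bar\sigma^n=\lambda$, so $q_{BB}(\beta)=n|\lambda|^2+(1-n)|\lambda|^2=|\lambda|^2$. Hence $q_{BB}$ is positive definite on this two-dimensional real piece.

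Next I would study $q_{BB}$ on $H^{1,1}(X,\R)$. By the same type argument the second summand of $q_{BB}$ vanishes there, so $q_{BB}(\alpha)=\frac{n}{2}\int_X\alpha^2(\sigma\bar\sigma)^{n-1}$. Fix a Kähler class $\omega$ and consider the Lefschetz decomposition $H^{1,1}(X,\R)=\R\omega\oplus P$, where $P$ is the $\omega$-primitive part. Polarizing the Beauville--Fujiki relation $q_{BB}(\cdot)^n=c\int_X(\cdot)^{2n}$ at $\omega$ to first and second order yields
\[
q_{BB}(\omega)^{n-1}b(\omega,\alpha)=c\int_X\omega^{2n-1}\alpha,\qquad q_{BB}(\omega)^{n-1}q_{BB}(\alpha)=c(2n-1)\int_X\omega^{2n-2}\alpha^2,
\]
the second identity holding for $\alpha\in P$, where the cross term $b(\omega,\alpha)^2$ drops out thanks to the first identity. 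The first identity shows that the Lefschetz decomposition is $q_{BB}$-orthogonal. The constant $c$ is positive: testing Beauville--Fujiki on $\beta=\sigma+\bar\sigma$ gives $q_{BB}(\beta)=1$ from the previous step and $\int_X(\sigma+\bar\sigma)^{2n}=\binom{2n}{n}$ by binomial expansion and type vanishing, hence $c=1/\binom{2n}{n}>0$. Granting $q_{BB}(\omega)>0$, the Hodge--Riemann bilinear relation, which gives $\int_X\omega^{2n-2}\alpha^2<0$ for every nonzero real $\omega$-primitive $(1,1)$ class $\alpha$, combines with the displayed identity to force $q_{BB}|_P<0$. Thus $q_{BB}$ has signature $(1,h^{1,1}-1)$ on $H^{1,1}(X,\R)$, and using $h^{2,0}=h^{0,2}=1$ the global signature is $(3,b_2(X)-3)$.

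The main obstacle is the positivity $q_{BB}(\omega)>0$ for a Kähler class $\omega$. From Beauville--Fujiki and $c>0$ we have $q_{BB}(\omega)^n=c\int_X\omega^{2n}>0$, which determines the sign when $n$ is odd but is insufficient when $n$ is even. To close this gap I would combine the continuity of $q_{BB}$ with the connectedness of the Kähler cone (so the sign is constant on it) and verify positivity in a specific example, for instance on $S^{[n]}$ with $S$ a $K3$ surface, where $q_{BB}$ can be computed explicitly from the intersection form on $H^2(S,\Z)$. Alternatively, a pointwise Darboux computation directly shows that $\omega^2(\sigma\bar\sigma)^{n-1}$ is a nonnegative multiple of a volume form when $\omega$ is a positive real $(1,1)$-form, yielding $q_{BB}(\omega)>0$ straight from the definition.
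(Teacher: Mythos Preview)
The paper does not give its own proof of this proposition: it is quoted from the literature (the reference \cite{MR1963559}), so there is no argument to compare against. Your approach is essentially the standard one found there and in Beauville's original paper, and the first two steps (orthogonality by Hodge type, the computation $q_{BB}(\beta)=|\lambda|^2$ on the transcendental part) as well as the two polarized Beauville--Fujiki identities are correct.

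The one point that needs attention is your first proposed fix for $q_{BB}(\omega)>0$. Connectedness of the K\"ahler cone together with $q_{BB}(\omega)^n=c\int_X\omega^{2n}\neq 0$ only tells you that the sign of $q_{BB}(\omega)$ is constant on the K\"ahler cone of \emph{that particular} $X$; it does not let you transport the sign from $S^{[n]}$ to an arbitrary irreducible symplectic $X$, since these need not lie in the same deformation class (and even a deformation argument would say nothing about the O'Grady examples). Your second fix is the correct one and is what is actually done in the references: for $\alpha\in H^{1,1}(X,\R)$ one has $q_{BB}(\alpha)=\tfrac{n}{2}\int_X\alpha^2(\sigma\bar\sigma)^{n-1}$, and at a point, diagonalising $\omega$ and expanding $\sigma=\sum_{i<j}a_{ij}\,dz_i\wedge dz_j$, the top-degree form $\omega^2\wedge(\sigma\bar\sigma)^{n-1}$ is a positive linear combination of terms $|a_{ij}|^2$ times the volume form, hence strictly positive since $\sigma$ is nowhere zero. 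With $q_{BB}(\omega)>0$ established this way, your Hodge--Riemann argument on the primitive part goes through and the signature statement follows.
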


\begin{rem}
\label{non-fixed q non-negative}
For a divisor $D\in Div(X)$, we define $q_{BB}(D):=q_{BB}(c_1(\mathcal O_X(D)))$.\\
If $D$ is effective and without fixed components, then $q_{BB}(D)\geq 0$. Indeed, let $D'$ be an effective divisor linearly equivalent to $D$ and with no components in common with $D$. We have
$$q_{BB}(D)=\frac n2 \int_{D\cap D'}(\sigma \bar\sigma)^{n-1},$$
where each irreducible component of the intersection $D\cap D'$ is counted with its multiplicity. The integral on the right hand side is non-negative because $\sigma$ is a holomorphic form.\\
If furthermore $D$ is ample, then by Beauville-Fujiki relation $q_{BB}(D)>0$.
\end{rem}

\subsection{Bimeromorphic maps between irreducible symplectic manifolds}

A bimeromorphic map $f\colon M\rato M'$ between compact complex manifolds is an isomorphism in codimension $1$ if there exist dense open subsets $U\subset M$ and $U'\subset M'$ such that
\begin{enumerate}
\item $\codim(X\setminus U)\geq 2, \codim (X'\setminus U')\geq 2$;
\item $f$ induces an isomorphism $U\cong U'$.
\end{enumerate}
A \textbf{pseudo-automorphism} of a complex manifold $X$ is a bimeromorphic transformation which is an isomorphism in codimension $1$.

\begin{prop}[Proposition 21.6 and 25.14 in \cite{MR1963559}]
\label{pseudo-aut}
Let $f\colon X\rato X'$ be a bimeromorphic map between irreducible symplectic manifolds. Then $f$ is an isomorphism in codimension $1$ and induces a linear isomorphism $f^*\colon H^2(X',\Z)\xrightarrow{\sim} H^2(X,\Z)$ which preserves the Beauville-Bogomolov form.\\
In particular, the group of birational transformation of an irreducible symplectic manifold $X$ coincides with its group of pseudo-automorphisms and acts by isometries on $H^2(X,\Z)$.
\end{prop}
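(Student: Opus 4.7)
The plan is to resolve the graph of $f$ and exploit the essential uniqueness of the holomorphic symplectic form. Let $\pi\colon Y\to X$ be a resolution of the indeterminacy of $f$ (with $Y$ a smooth compact K\"ahler manifold), so that $g:=f\circ\pi\colon Y\to X'$ is holomorphic. Because $h^{2,0}$ is a bimeromorphic invariant among compact K\"ahler manifolds, $H^{2,0}(Y)$ is one-dimensional; the nonzero holomorphic forms $\pi^*\sigma$ and $g^*\sigma'$ must therefore be proportional, and after rescaling $\sigma'$ we may assume $\pi^*\sigma = g^*\sigma'$ on $Y$. Raising to the $n$-th exterior power yields the key identity of sections of $K_Y$,
\[(\pi^*\sigma)^n = (g^*\sigma')^n.\]

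First I would deduce that $f$ is an isomorphism in codimension one. Since $X$ is smooth one may write $K_Y = \pi^*K_X + R_\pi$ with $R_\pi$ effective and $\pi$-exceptional; because $\sigma^n$ trivialises $K_X$, the section $(\pi^*\sigma)^n$ of $K_Y$ vanishes exactly along $R_\pi$, and analogously $(g^*\sigma')^n$ vanishes exactly along an effective $g$-exceptional divisor $R_g$. The displayed identity then forces $R_\pi = R_g$, so $\pi$ and $g$ contract the same prime divisors. Were $f$ to contract some prime divisor $D\subset X$, its strict transform in $Y$ would be $g$-exceptional but not $\pi$-exceptional, contradicting $R_\pi = R_g$; the same argument applied to $f^{-1}$ completes this step.

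Finally I would construct $f^*$ and establish the isometry property. There exist dense open subsets $U\subset X$ and $U'\subset X'$ with complements of codimension at least two such that $f$ restricts to an isomorphism $U\to U'$; as $X$, $X'$ are smooth, the restriction maps $H^2(X,\Z)\to H^2(U,\Z)$ and $H^2(X',\Z)\to H^2(U',\Z)$ are isomorphisms, so $f$ induces $f^*\colon H^2(X',\Z)\to H^2(X,\Z)$. For the preservation of $q_{BB}$ I would appeal to the Beauville--Fujiki relation $q_{BB}(\alpha)^n = c_X\int_X\alpha^{2n}$: the Fujiki constant is a deformation invariant, and bimeromorphic irreducible symplectic manifolds are deformation equivalent, so $c_X = c_{X'}$. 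Combined with $\int_X(f^*\alpha)^{2n} = \int_{X'}\alpha^{2n}$ (which follows from $\pi^*f^*\alpha = g^*\alpha$ in $H^2(Y,\R)$ and the degree-one projection formula), this yields $q_{BB}(f^*\alpha)^n = q_{BB}(\alpha)^n$ for every $\alpha$; the signature $(3,b_2(X)-3)$ makes the positive cone $\{q_{BB}>0\}$ connected, so the continuous ratio $q_{BB}(f^*\alpha)/q_{BB}(\alpha)$ is a constant $n$-th root of unity there, which an explicit evaluation at $[\sigma'+\bar\sigma']$ pins down to $+1$, and polarisation promotes this to the desired isometry. The main technical obstacle is the initial identity $\pi^*\sigma = g^*\sigma'$, which rests on the bimeromorphic invariance of $h^{2,0}$ in the compact K\"ahler setting; once this is in hand, the identification of exceptional divisors and the Fujiki-based isometry argument are essentially mechanical.
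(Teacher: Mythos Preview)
The paper does not actually prove this proposition: it is stated with an attribution to Propositions 21.6 and 25.14 of \cite{MR1963559} and used thereafter without further argument. So there is no ``paper's own proof'' to compare against. That said, your argument is largely the standard one and is worth commenting on.

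Your treatment of the first assertion is correct and is exactly Huybrechts' argument: pulling back the symplectic forms to a common resolution forces the $\pi$- and $g$-exceptional divisors to coincide, which precludes $f$ or $f^{-1}$ from contracting any divisor. The construction of $f^*$ via the codimension-two restriction isomorphisms is also fine.

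The one point that deserves care is your justification of $c_X=c_{X'}$ by invoking that bimeromorphic irreducible symplectic manifolds are deformation equivalent. That result is itself a theorem of Huybrechts whose proof \emph{uses} that the birational map induces a Hodge isometry on $H^2$; appealing to it here is circular. The cleaner route, and the one taken in \cite{MR1963559}, is to work directly with the explicit definition
\[
q_{BB}(\alpha)=\frac n2\int_X\alpha^2(\sigma\bar\sigma)^{n-1}+(1-n)\Bigl(\int_X\alpha\sigma^n\bar\sigma^{n-1}\Bigr)\Bigl(\int_X\alpha\sigma^{n-1}\bar\sigma^n\Bigr),
\]
using that $f^*\sigma'=c\sigma$ with $|c|=1$ (which follows from $\int_X(\sigma\bar\sigma)^n=\int_{X'}(\sigma'\bar\sigma')^n=1$ and the codimension-one isomorphism), and that all the integrals can be computed on $U\cong U'$ since the complements have measure zero. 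This yields $q_{BB}(f^*\alpha)=q_{BB}(\alpha)$ directly, without any detour through Fujiki constants, root-of-unity arguments, or deformation theory.
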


\subsection{Isometries of hyperbolic spaces}
\label{isometries hyperbolic}

Proposition \ref{signature BB} establishes a parallel between the dynamics of automorphisms of compact K\"ahler surfaces and that of bimeromorphic transformations of irreducible symplectic manifolds: in both cases the map on the manifold induces an isomorphism at the level of the integral cohomology group $H^2(X,\Z)$ preserving a non-degenerate quadratic form (the intersection form in the surface case and the Beauville-Bogomolov form in the irreducible symplectic one). By Hodge's index theorem, the intersection form on the Picard group of a surface $S$ has signature $(1, \rho(S)-1)$, which leads to a classification of automorphisms of surfaces as loxodromic, parabolic or elliptic depending on their action on the hyperbolic lattice $NS_\Z(S)$ (see \cite{MR3289919}).\\
Analogously if $X$ is an irreducible symplectic manifold, the restriction of the Beauville-Bogomolov form to $H^{1,1}(X,\R)$ has signature $(1,h^{1,1}(X)-1)$. Since $H^{1,1}(X,\R)$ is invariant by the action of a bimeromorphic transformation $f\colon X\rato X$, and since the two lines $\C\sigma$ and $\C\bar\sigma$ are also invariant (the action of $f^*$ being given by multiplication by a complex number of modulus $1$), we can also classify bimeromorphic transformations of irreducible symplectic manifolds depending on their action on $H^2(X,\Z)$ as follows.

\begin{dfn}
Let $f\colon X\rato X$ a bimeromorphic transformation of an irreducible symplectic manifold (respectively, an automorphism of a compact K\"ahler surface) and denote by $f_1^*$ the linear automorphism of $H^{1,1}(X,\R)$ induced by $f$. We say that $f$ is
\begin{itemize}
\item \emph{loxodromic} if $f_1^*$ admits an eigenvalue of modulus strictly greater than $1$ (or, equivalently, if $\lambda_1(f)>1$);
\item \emph{parabolic} if all the eigenvalues of $f_1^*$ have modulus $1$ and $||(f^n)_1^*||$ is not bounded as $n\to +\infty$;
\item \emph{elliptic} if $||(f^n)_1^*||$ is bounded as $n\to +\infty$.
\end{itemize}
\end{dfn}

In each of the cases above, simple linear algebra arguments allow to further describe the situation.\\
Denote by $\mathcal C_{\geq 0}\subset H^{1,1}(X,\R)$ (respectively $\mathcal C_0\subset H^{1,1}(X,\R)$) the positive (resp. null) cone for the Beauville-Bogomolov (repsectively, intersection) form $q$:
$$\mathcal C_{\geq 0}= \{\alpha\in H^{1,1}(X,\R) | q(\alpha)\geq 0\},$$
$$\mathcal C_{0}= \{\alpha\in H^{1,1}(X,\R) | q(\alpha)=0\}.$$
$\mathcal C_0$ is called the \emph{isotropic cone} for the Beauville-Bogomolov form. 

For a proof of the following result, see \cite{MR2533769} (for irreducible symplectic manifolds) and \cite{MR3289919} (for surfaces).

\begin{thm}
\label{characterization birational}
Let $f\colon X\rato X$ a bimeromorphic transformation of an irreducible symplectic manifold (respectively, an automorphism of a compact K\"ahler surface).
\begin{itemize}
\item If $f$ is loxodromic, then $f_1^*$ has exactly one eigenvalue with modulus $>1$ and exactly one eigenvalue with modulus $<1$; these eigenvalues are real, simple and they are the inverse of each another; their eigenspaces are contained in $\mathcal C_0$, they are the only $f_1^*$-invariant lines in $\mathcal C_{\geq 0}$ and they are not defined over $\Z$.
\item If $f$ is parabolic, then all eigenvalues of $f^*_1$ are roots of unity; the Jordan form of $f^*$ has exactly one non-trivial Jordan block, which is of dimension $3$ (in other words $||(f^n)_1^*||$ has quadratic growth); for every $\alpha\in H^{1,1}(X,\R)$, $(f^n)_1^*\alpha / n^2$ converges to a class contained in $\mathcal C_0$, which (for every $\alpha$ outside a proper subspace of $H^{1,1}(X,\R)$) spans the only $f_1^*$-invariant line of $\mathcal C_{\geq 0}$.
\item If $f$ is elliptic, then some iterate of $f_1^*$ is equal to the identity.
\end{itemize}
\end{thm}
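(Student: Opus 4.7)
The plan is to exploit two structural facts about $f_1^*$ acting on the real vector space $V := H^{1,1}(X,\R)$: it is an isometry of the Beauville--Bogomolov form $q$ of hyperbolic signature $(1,h^{1,1}-1)$ (Proposition \ref{signature BB} and \ref{pseudo-aut}), and its eigenvalues are algebraic integers because $f^*$ restricted to $H^2(X,\Z)$ has an integer characteristic polynomial (and the $(H^{2,0}\oplus H^{0,2})_\R$-part only contributes eigenvalues of modulus $1$ since $q$ is positive definite there). From preservation of $q$ one also deduces the standard reciprocity $\lambda\leftrightarrow 1/\lambda$ of the eigenvalues, and from the hyperbolic signature one deduces that the Witt index of $q$ is $1$, so $V$ admits no totally $q$-isotropic plane. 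These three pieces -- hyperbolicity, integrality, reciprocity -- are what each of the three cases will run on.

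For the loxodromic case I would take any eigenvector $v$ with eigenvalue $\lambda$, $|\lambda|>1$, and observe that $q(v)=\lambda^{2}q(v)$ forces $v\in\mathcal C_0$. If $\lambda$ were non-real, I would look at the real $f_1^*$-invariant plane $\R\langle v,\bar v\rangle$: $f_1^*$ would act there as a $|\lambda|$-dilation composed with a rotation, but $f_1^*$ is a $q$-isometry, which forces $q$ to vanish identically on the plane, contradicting Witt index $1$. The same ``totally isotropic plane'' obstruction rules out a second eigenvalue of modulus $>1$ (its eigenspace would pair trivially with the first, by reciprocity $\lambda_1\lambda_2\neq 1$) and rules out a Jordan block of size $\geq 2$ (a generalized eigenvector $w$ at the top of the block would satisfy $q(w)=q(w,v)=0$ by similar algebra, again yielding an isotropic plane). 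For the final integrality statement, if $\lambda$ were rational then, being an algebraic integer whose inverse $1/\lambda$ is also an algebraic integer, $\lambda$ would be $\pm 1$, contradicting $|\lambda|>1$; hence $\R v$ is not $\Q$-defined. Uniqueness of invariant lines in $\mathcal C_{\geq 0}$ then follows by decomposing $V$ orthogonally as $(L_\lambda\oplus L_{1/\lambda})\oplus W$ with $W$ negative definite, and noting that any fixed line in $\mathcal C_{\geq 0}$ must sit in $W$ and hence be zero.

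The parabolic case is the main obstacle. Here all eigenvalues of $f^*$ on all of $H^2(X,\R)$ lie on the unit circle (by the previous paragraph plus definition), so Kronecker's theorem applied to the integral action on $H^2(X,\Z)$ forces every eigenvalue to be a root of unity. After replacing $f$ with an iterate I may assume $f_1^*$ is unipotent, write $N=f_1^*-I$, and analyse its Jordan structure via signature constraints. Working with a Jordan chain $u_1,\dots,u_k$ for $N$ and expanding $q(f_1^*u_i,f_1^*u_j)=q(u_i,u_j)$, one extracts inductive relations forcing $q(u_1,u_j)=0$ for $j<k$; then for a chain of length $k\geq 4$ one obtains $q(u_1,u_k)=0$ as well, which places $u_1$ in the radical of $q$, contradicting non-degeneracy. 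A chain of length $k=2$ collapses for the same reason, and two independent chains of length $\geq 3$ would produce a non-degenerate $(2,2)$-subspace, ruled out by signature $(1,h^{1,1}-1)$. This leaves exactly one non-trivial block, of size $3$, whose restriction of $q$ has signature $(1,2)$ by a direct Gram-matrix computation (its determinant is $q(u_1,u_3)^{3}$, and the alternative sign $(2,1)$ is excluded by the global signature). The explicit formula $(f^n)_1^*=I+nN+\binom{n}{2}N^2$ then gives the quadratic growth and the limit $(f^n)_1^*\alpha/n^2\to \tfrac{1}{2}c(\alpha)u_1$, with $u_1\in\mathcal C_0$ the eigenvector of the block; the fact that no other eigenvector lies in $\mathcal C_{\geq 0}$ follows as in the loxodromic case from negative-definiteness of the orthogonal complement of the $3$-block.

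The elliptic case is immediate once parabolic is understood: boundedness of $\|(f^n)_1^*\|$ forces $f_1^*$ to be semisimple with all eigenvalues on the unit circle, and Kronecker again promotes these to roots of unity, so a suitable power of $f_1^*$ is the identity. The real work is entirely in the parabolic Jordan analysis; hyperbolicity and integrality carry essentially all of the loxodromic argument and the conclusion of the elliptic one for free.
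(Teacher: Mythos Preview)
The paper does not actually prove this theorem: it simply refers the reader to \cite{MR2533769} and \cite{MR3289919}. Your sketch is essentially the standard argument that appears in those references, and it is correct in outline.

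A couple of small points worth tightening. In the parabolic case, when you argue that a Jordan chain of length $k=2$ or $k\geq 4$ forces $u_1$ into the radical of $q$, you must also check that $u_1$ is orthogonal to the other Jordan blocks, not just to its own chain. This follows cleanly from the adjoint identity $N^*=f^{-1}-I$, which gives $(\operatorname{im}N)^\perp=\ker N$; hence $\operatorname{im}N\cap\ker N$ is totally isotropic and has dimension at most $1$ in signature $(1,h^{1,1}-1)$. The same observation replaces your ``two chains of length $\geq 3$ yield a $(2,2)$-subspace'' step (which is not quite right as stated, since distinct Jordan blocks need not be $q$-orthogonal): the bottom vectors of two nontrivial blocks would both lie in $\operatorname{im}N\cap\ker N$, contradicting its being at most a line. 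With these adjustments your parabolic analysis goes through, and the loxodromic and elliptic cases are fine as written.
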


\subsection{The parabolic case}
\label{parabolic case}

In the case of surfaces, an automorphism being of parabolic type has a clear geometric interpretation (see \cite{gizatullin1980rational}, \cite{grivaux2013parabolic}, or \cite{MR1867314} for the birational case).

\begin{thm}
\label{fibrations surfaces}
Let $S$ be a compact K\"ahler
 surface; an automorphism $f\colon S\to S$ is of parabolic type if and only if there exists an $f$-invariant fibration $\pi\colon S\to C$ onto a nonsingular compact curve $C$.

\end{thm}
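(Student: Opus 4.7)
The plan is to treat the two implications separately, using Theorem~\ref{characterization birational} for the easy ``fibration $\Rightarrow$ parabolic'' direction and focusing on extracting a fibration from the cohomological data in the reverse direction.

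For the first direction, given an $f$-invariant fibration $\pi\colon S\to C$ with induced automorphism $g\colon C\to C$, the class $[F]$ of a generic fibre is a nonzero integral class in $H^{1,1}(S,\R)$, satisfies $[F]^2=0$, and is fixed by $f_1^*$: indeed $f^*[F]=f^*\pi^*[\mathrm{pt}]=\pi^*g^*[\mathrm{pt}]=[F]$ since points on a curve are numerically equivalent. This provides an $f_1^*$-invariant integral line inside the isotropic cone $\mathcal{C}_0$. By Theorem~\ref{characterization birational}, the loxodromic case admits no such invariant line defined over $\Z$, so $f$ is not loxodromic; under the tacit assumption that $f_1^*$ has infinite order (the elliptic case being trivially compatible with any fibration on $S$), $f$ must therefore be parabolic.

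For the converse, my plan proceeds in three steps. First, by Theorem~\ref{characterization birational}, after replacing $f$ by a suitable iterate I may assume $f_1^*$ has a single $3$-dimensional Jordan block with eigenvalue $1$, and that $(f^n)^*\alpha/n^2$ converges, for a generic class $\alpha\in H^{1,1}(S,\R)$, to an isotropic class $\beta$ spanning the unique $f_1^*$-invariant line in $\mathcal{C}_{\geq 0}$; since it is the limit of classes in the K\"ahler cone, $\beta$ is nef. Second, I would show that $\beta$ is rational: the kernel of $f_1^*-\mathrm{id}$ on $H^{1,1}(S,\R)$ is defined over $\Q$ because $f^*$ is defined over $\Z$, so the unique invariant isotropic line it contains is spanned by a primitive integral class $D\in NS(S)$, still nef with $D^2=0$. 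Third, I would extract a fibration from $D$: Riemann--Roch on surfaces gives $\chi(\mathcal{O}_S(mD))=\chi(\mathcal{O}_S)-\tfrac{m}{2}D\cdot K_S$, linear in $m$, and combining this with Serre duality, nefness of $D$ and input from the Enriques--Kodaira classification, some positive multiple $mD$ moves in a pencil without fixed components, whose Stein factorization yields the desired fibration $\pi\colon S\to C$ onto a smooth compact curve. Invariance under $f$ is automatic, as $f^*D=D$ forces $f$ to permute the fibres of $\pi$ and hence to descend to an automorphism $g\colon C\to C$ with $\pi\circ f=g\circ\pi$.

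The main obstacle will be the last step --- passing from the purely cohomological data ``$D$ integral, nef, $D^2=0$'' to the geometric statement that $|mD|$ defines a holomorphic fibration onto a smooth curve. In the projective setting this is relatively standard via Riemann--Roch and a Bertini-type argument, but in the general K\"ahler case one must appeal to the Enriques--Kodaira classification to handle non-algebraic surfaces, and be careful with base points and multiple fibres so that the target $C$ can indeed be taken nonsingular.
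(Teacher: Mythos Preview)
The paper does not prove this theorem: it is stated with references to Gizatullin, to Grivaux, and (for the birational case) to Diller--Favre, and then used as background. So there is no ``paper's own proof'' to compare against; your plan is to be measured against the cited literature, and it follows the standard route taken there.

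Two comments on your sketch. First, in the rationality step your argument is incomplete as written: knowing that $\ker(f_1^*-\mathrm{id})$ is defined over $\Q$ does not by itself force a particular line inside it to be rational, since that kernel may well have dimension greater than one. The clean fix is to observe that, after passing to an iterate so that $f_1^*$ is unipotent with a single $3\times 3$ Jordan block, the invariant isotropic line coincides with $\mathrm{im}\,(f_1^*-\mathrm{id})^2$, which is a rational subspace because $f^*$ is defined over $\Z$; this yields a primitive class $D\in NS(S)$, nef with $D^2=0$ and $f^*D=D$. Second, your identification of the main obstacle is accurate: the passage from ``$D$ integral, nef, $D^2=0$, $f^*D=D$'' to an honest fibration onto a smooth curve is where the real work lies, and in the cited references this is carried out via a case analysis through the Enriques--Kodaira classification rather than by Riemann--Roch alone.
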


We could expect the situation to be similar in the irreducible symplectic context; indeed, Hu, Keum and Zhang have proved a partial analogue to Theorem \ref{fibrations surfaces}, see \cite{MR3431659}:

\begin{thm}
\label{hu zhang}
Let $X$ be a $2n$-dimensional projective irreducible symplectic manifold of type $K3^{[n]}$ or of type generalized Kummer and let $f\in Bir(X)$ be a bimeromorphic transformation which is not elliptic; $f$ is parabolic if and only if it admits a rational Lagrangian invariant fibration $\pi\colon X\rato \Pj^n$ such that the induced transformation on $\Pj^n$ is biregular, i.e. there exists $g\in Aut(\Pj^n)$ such that $\pi\circ f=g\circ \pi$.
\end{thm}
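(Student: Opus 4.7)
The plan is to treat the two implications separately, the harder part being the construction of the fibration in the parabolic case.

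\emph{Fibration $\Rightarrow$ parabolic.} Let $H\in H^2(\Pj^n,\Q)$ be the hyperplane class and set $\alpha:=\pi^*H\in H^{1,1}(X,\Q)$. Since $\pi$ is Lagrangian, its general fibre has dimension $n$, so $\alpha^{n+1}=\pi^*(H^{n+1})=0$; by the Beauville--Fujiki relation this forces $q_{BB}(\alpha)=0$, so $\alpha$ is a nonzero element of $\mathcal{C}_0$. Because $g\in\Aut(\Pj^n)=\mathrm{PGL}_{n+1}$ is connected, it acts trivially on $H^2(\Pj^n,\Z)$, and the relation $\pi\circ f=g\circ\pi$ gives $f^*\alpha=\pi^*g^*H=\alpha$. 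Hence $\Q\alpha$ is a rational $f_1^*$-invariant line in $\mathcal{C}_{\geq 0}$; Theorem \ref{characterization birational} rules out the loxodromic case (where such invariant lines are irrational), and the elliptic case is excluded by hypothesis, so $f$ must be parabolic.

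\emph{Parabolic $\Rightarrow$ fibration.} By Theorem \ref{characterization birational}, $f_1^*$ has a unique invariant line $\ell\subset\mathcal{C}_{\geq 0}$, and $\ell\subset\mathcal{C}_0$. Since $f^*$ preserves the integral lattice $H^2(X,\Z)$, its Jordan structure is defined over $\Q$, so $\ell$ is rational and one can pick $0\neq\alpha_0\in\ell\cap H^{1,1}(X,\Q)$. Using that $f^*$ preserves the birational Kähler cone -- a consequence of $f$ being a pseudo-automorphism (Proposition \ref{pseudo-aut}) -- and reflecting if needed, one arranges $\alpha_0$ to lie in the closure of that cone. Now I invoke the hyperkähler SYZ conjecture, which for $X$ of type $K3^{[n]}$ or generalized Kummer is a theorem (Matsushita, Markman, Bayer--Macrì, Yoshioka): a nonzero isotropic class in the closure of the birational Kähler cone is, up to positive multiple, of the form $\pi^*H$ for a unique rational Lagrangian fibration $\pi\colon X\rato\Pj^n$.

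It remains to check that the induced $g\colon\Pj^n\rato\Pj^n$ is biregular. The uniqueness of $\pi$ in its numerical class forces $f$ to permute the fibres, so $g$ exists as a birational self-map with $\pi\circ f=g\circ\pi$. Writing $g^*H=dH$ with $d=\deg g\in\Z_{>0}$ and combining with $f^*\alpha_0=\alpha_0$ gives $\pi^*(dH)=\pi^*H$; since $\pi^*$ is injective on $H^2(\Pj^n,\Q)$, this yields $d=1$. A birational self-map of $\Pj^n$ of algebraic degree one is linear, so $g\in\Aut(\Pj^n)$.

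The main obstacle is the appeal to the hyperkähler SYZ conjecture: producing the fibration from the abstract invariant isotropic class $\alpha_0$ requires both (i) placing $\alpha_0$ in the correct chamber of the positive cone by combining its $f^*$-invariance with the monodromy/Weyl group action on the movable cone, and (ii) the deep classification results that are specific to the two deformation types at hand (and fail, or are at least unknown, in O'Grady's sporadic examples, which is presumably why the theorem is restricted to $K3^{[n]}$ and generalized Kummer types). Everything else reduces to elementary linear algebra on the hyperbolic lattice $(H^{1,1}(X,\R),q_{BB})$ together with the functoriality of Lagrangian fibrations.
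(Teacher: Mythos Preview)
The paper does not prove Theorem \ref{hu zhang}: it is quoted from \cite{MR3431659} (Hu--Keum--Zhang) purely as context, and the sentence following the statement makes this explicit (``The hard direction is to exhibit an invariant fibration for a parabolic transformation. The Main Theorem generalizes the converse\ldots''). So there is no in-paper proof to compare your attempt against. The paper's own contribution, the Main Theorem, is a strengthening of the easy implication only: it shows that a loxodromic $f$ admits \emph{no} invariant fibration whatsoever (Lagrangian or not, over $\Pj^n$ or not, with biregular base map or not), on an arbitrary irreducible symplectic manifold; and its proof proceeds by an entirely different route (Lemma \ref{key lemma} forcing general-type fibres, then the isotriviality Proposition).

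Your outline is essentially the argument of \cite{MR3431659}, and is correct in its broad strokes. Two small points you skated over: first, in the hard direction you use $f^*\alpha_0=\alpha_0$, but a priori the eigenvalue on the invariant line $\ell$ is only a root of unity; that it equals $+1$ follows because $\alpha_0$ can be realized as the limit $(f^n)^*\omega/n^2$ for $\omega$ in the movable cone, which $f^*$ preserves. Second, once you know $\alpha_0$ is fixed, the biregularity of $g$ comes for free without computing degrees: the fibration $\pi$ is the map given by the complete linear system $|m\alpha_0|$ for $m\gg0$, and since $f$ is a pseudo-automorphism it induces a linear automorphism of $H^0(X,m\alpha_0)$, so $\pi\circ f$ and $\pi$ differ by an element of $\mathrm{PGL}$. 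Your degree computation also works, but the identity $(g\circ\pi)^*H=\pi^*g^*H$ for $g$ merely birational is not covered by Lemma \ref{pullback pseudo-aut} and needs a short justification via equidimensionality of Lagrangian fibrations (Matsushita).
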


The hard direction is to exhibit an invariant fibration for a parabolic transformation. The Main Theorem generalizes the converse, proving that the dynamics of a loxodromic transformation is too complicated to expect an invariant fibration.

\section{Proof of the main results}
\label{proof main results}

Throughout this section, $f\colon X\rato X$ denotes a loxodromic bimeromorphic transformation of an irreducible symplectic manifold $X$, $\pi\colon X\rato B$ a meromorphic invariant fibration onto a K\"ahler manifold $B$ such that $0<\dim B<\dim X$ and $g\colon B\rato B$ the induced transformation of the base.

\centerline{\xymatrix{
X \ar@{-->}[d]^\pi \ar@{-->}[r]^f & X \ar@{-->}[d]^\pi\\
B \ar@{-->}[r]^g & B
}}

The results in this Section are largely inspired by \cite{MR2400885}.

\subsection{Meromorphic fibrations on irreducible symplectic manifolds}
\label{fibrations HK}
We collect here some useful facts about the fibration $\pi$.

\begin{rem}
\label{projective base}
If $B$ is K\"ahler, then it is projective. Indeed, if $B$ wasn't projective, by Kodaira's projectivity criterion and Hodge decomposition
$$H^2(B,\C)=H^{2,0}(B)\oplus H^{1,1}(B)\oplus H^{0,2}(B),$$
we would have $H^{2,0}(B)\neq \{0\}$, meaning that $B$ carries a non-trivial holomorphic $2$-form $\sigma_B$. Since the indeterminacy locus of $\pi$ has codimension at least $2$, the pull-back $\pi^*\sigma_B$ can be extended to a global non-trivial $2$-form on $X$ which is not a multiple of $\sigma$, contradicting the hypothesis on $X$.
\end{rem}

Here we use the same conventions as in \cite{MR2400885}: let $\eta\colon \tilde X\to X$ be a resolution of the indeterminacy locus of $\pi$ (see \cite{MR0506253}), and let $\nu\colon \tilde X\to B$ be the induced holomorphic fibration, whose generic fibre is bimeromorphic to that of $\pi$. 

\centerline{\xymatrix{
\tilde X \ar[d]^\eta \ar[rd]^\nu&\\
X \ar@{-->}[r]^\pi & B
}}

The pull-back $\pi^*D$ of an effective divisor $D\in Div(B)$ is defined as
$$\pi^*D=\eta_*\nu^*D,$$
where $\eta_*$ is the pushforward as cycles. The pull-back induces linear morphisms $Pic(B)\to Pic(X)$ and $NS(B)\to NS(X)$, and is compatible with the pull-back of smooth forms defined in Section \ref{dyn deg}.


Now let $H\in Pic(B)$ be an ample class, and let $L=\pi^*H$. The pull-back of the complete linear system $|H|$ is a linear system $U\subset |L|$, whose associated meromorphic fibration is exactly $\pi$. In particular, $L$ has no fixed component, and by Remark \ref{non-fixed q non-negative} we have $q_{BB}(L)\geq 0$.

Let $NS(B)\subset H^{1,1}(B,\R)$ denote the Neron-Severi group with real coefficients of $B$. The following Lemma is essentially proven in \cite{MR2400885}.



\begin{lemma}
\label{pullback isotropic}

The restriction of the Beauville-Bogomolov form to the pull-back $\pi^*NS(B)$ is not identically zero if and only if the generic fibre of $\pi$ is of general type. If this is the case, then $X$ is projective.
\end{lemma}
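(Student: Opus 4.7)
My plan is to use the Beauville-Fujiki relation $q_{BB}(\alpha)^n = c \int_X \alpha^{2n}$ to convert the statement about $q_{BB}$ into one about top self-intersections of pull-back classes. By polarization, $q_{BB}|_{\pi^*NS(B)}$ vanishes identically if and only if $q_{BB}(\pi^*H) = 0$ for every ample class $H$ on $B$; and, since $\pi^*H$ is nef and $q_{BB}(\pi^*H)\geq 0$ by Remark \ref{non-fixed q non-negative}, the Beauville-Fujiki relation shows that this vanishing is in turn equivalent to $\pi^*H$ failing to be big on $X$.

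The core of the proof is therefore a relative Kodaira-Iitaka analysis relating the bigness of $\pi^*H$ to the geometry of the generic fibre. I would work on the resolution $\nu\colon \tilde X \to B$ of the indeterminacy of $\pi$, use the fact that $(\nu^*H)^{k+1} = 0$ where $k=\dim B$ (by functoriality of holomorphic pull-back), and expand $(\eta^*\pi^*H)^{2n}$ in terms of $\nu^*H$ and the $\eta$-exceptional correction $F = \eta^*\pi^*H - \nu^*H$. The surviving mixed terms localize near the generic fibre, and their contribution to $\int_X (\pi^*H)^{2n}$ can be compared with the asymptotic volume of pluricanonical sections on the fibre: bigness of $\pi^*H$ on $X$ corresponds precisely to the generic fibre being of general type, while in the non-general-type case the Iitaka inequality $\kappa(\pi^*H) = \kappa(H) = \dim B < \dim X$ forces $(\pi^*H)^{2n}=0$.

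The main technical obstacle is the meromorphic nature of $\pi$: because $\pi^*H = \eta_*\nu^*H$ differs from $\nu^*H$ by $\eta$-exceptional contributions, the clean identity $(\pi^*H)^{k+1} = 0$ that would hold in the holomorphic case is replaced by a careful bookkeeping of exceptional correction terms. Their positivity must be controlled by the nefness of $\pi^*H$ together with the negativity properties of $\eta$-exceptional divisors, and this is presumably where the argument follows \cite{MR2400885} closely.

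For the final assertion, if the restriction is not identically zero then some rational class in $\pi^*NS(B)\subset NS(X)$ has strictly positive Beauville-Bogomolov form; by Huybrechts's projectivity criterion for irreducible symplectic manifolds (see \cite{MR1963559}), the existence of such a class in $NS(X)$ forces $X$ to be projective.
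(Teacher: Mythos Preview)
Your overall framework is sound and matches the paper's: both reduce the statement to the equivalence
\[
q_{BB}(\pi^*H)>0 \ \Longleftrightarrow\ \pi^*H \text{ is big on }X \ \Longleftrightarrow\ \text{the generic fibre is of general type},
\]
and both deduce projectivity from the existence of a big line bundle (equivalently, from Huybrechts's criterion). The step $q_{BB}(L)>0 \Leftrightarrow L$ big is essentially correct; the paper does it without invoking nefness, using \cite{MR2050205} for one implication and the decomposition $L=A+E$ (ample plus effective) together with Remark~\ref{non-fixed q non-negative} for the other. Your use of nefness of $\pi^*H$ is not justified: push-forward of a nef class under a birational morphism need not be nef, and you should avoid relying on it.

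The genuine gap is in the core step linking bigness of $\pi^*H$ to the generic fibre being of general type. Your assertion that in the non-general-type case ``the Iitaka inequality $\kappa(\pi^*H)=\kappa(H)=\dim B$'' holds is precisely what fails when $\pi$ is meromorphic: the pull-back $\pi^*H=\eta_*\nu^*H$ differs from $\nu^*H$ by $\eta$-exceptional divisors, and these can contribute extra sections, so $\kappa(\pi^*H)$ can exceed $\dim B$. The paper handles the ``if'' direction by invoking the addition formula of Amerik--Campana \cite[Theorem 2.3]{MR2400885}, namely $\kappa(X,\pi^*H)=\dim B+\kappa(F)$, which is a non-trivial result specific to this situation (trivial $K_X$) and is not recovered by your mixed-intersection expansion. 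For the ``only if'' direction the paper makes the link explicit: on the resolution one has $\eta^*L=\nu^*H+\sum a_iE_i$ and, by adjunction together with $K_X=0$, $K_F=\sum e_iE_i|_F$ with $e_i>0$; since $\nu^*H|_F$ is trivial, $mK_F-\eta^*L|_F$ is effective for large $m$, so bigness of $\eta^*L|_F$ forces $F$ to be of general type. Your proposed ``comparison with the asymptotic volume of pluricanonical sections on the fibre'' is pointing in the right direction, but without the adjunction identity $K_F=\sum e_iE_i|_F$ there is no bridge between the exceptional correction $F$ and the canonical bundle of the fibre; this identity is the missing idea in your sketch.
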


\begin{proof}
Remark first that, since the generic fibre of $\nu$ is bimeromorphic to the generic fibre of $\pi$ and the Kodaira dimension is a bimeromorphic invariant, the generic fibre of $\pi$ is of general type if and only if the generic fibre of $\nu$ is.\\
As a second remark, by \cite{moishezon1967criterion} if there exists a big line bundle on a compact K\"ahler manifold $X$, then $X$ is projective.

Suppose that the generic fibre of $\pi$ is of general type. Let $H$ be an ample divisor on $B$ and let $L=\pi^*H$. By \cite{MR2400885}[Theorem 2.3] we have $\kappa(X,L)=\dim(B)+\kappa(F)$, where $F$ is the generic fibre of $\nu$; we conclude that $L$ is big (and in particular $X$ is projective). We can thus write $L=A+E$ for an ample divisor $A$ and an effective divisor $E$ on $X$. Now, if $q$ denotes the Beauville-Bogomolov form, we have
$$q(L)=q(L,A)+q(L,E)\geq q(L,A)= q(A,A)+q(A,E)\geq q(A,A)>0,$$
where the first and second inequalities are consequences of $L$ and $A$ being without fixed components and the last one follows directly from Remark \ref{non-fixed q non-negative}
. This proves the "if" direction.

Now assume that the restriction of $q_{BB}$ to $\pi^*NS(B)$ is not identically zero. Since ample classes generate $NS_\R(B)$, there exists an ample line bundle $H\in Pic(B)$ such that, denoting $L=\pi^*H$, $q(L)\neq 0$; furthermore, $L$ is without fixed components, so that $q(L)>0$ by Remark \ref{non-fixed q non-negative}. It follows by\cite{MR2050205}[Theorem 4.3.i] that $L$ is big (thus $X$ is projective), and so is $\eta^*L$ since $\eta$ is a birational morphism. Therefore, the restriction $\eta^*L|_F$ to a generic fibre of $\nu$ is also big (see \cite{MR2095471}[Corollary 2.2.11]).
Now we have
$$\eta^*L=\nu^*H+\sum a_iE_i \qquad \text{for some }a_i\geq 0,$$
where the sum runs over all the irreducible components of the exceptional divisor of $\eta$. The adjunction formula leads to
$$K_F=K_{\tilde X}|_F+\det N^*_{F/\tilde X}=K_{\tilde X}|_F=\sum e_iE_i|_F \qquad \text{for some }e_i> 0,$$
since the conormal bundle $N^*_{F/\tilde X}$ is trivial.\\
This implies that, for some $m>0$, the divisor $mK_F-\eta^*L|_F$ is effective because $\nu^*H|_F$ is trivial. Thus
$$\kappa(F)\geq \kappa(F,\eta^*L|_F)=\dim(F),$$
meaning that $F$ is of general type. This proves the "only if" direction.
\end{proof}

\begin{cor}
\label{pullback line}
If the generic fibre of $\nu$ is not of general type, then $\pi^*NS(B)\subset H^{1,1}(X,\R)$ is a line contained in the isotropic cone $\mathcal C_0$.
\end{cor}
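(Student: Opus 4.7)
The proof will combine Lemma \ref{pullback isotropic} with the hyperbolic signature of the Beauville--Bogomolov form provided by Proposition \ref{signature BB}: once $q_{BB}$ restricted to $\pi^*NS(B)$ is known to vanish, the dimension bound becomes a purely linear-algebraic statement about totally isotropic subspaces of a quadratic form of signature $(1,h^{1,1}(X)-1)$.

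The first step is to apply Lemma \ref{pullback isotropic}. Since, by hypothesis, the generic fibre of $\nu$ (hence of $\pi$, as noted at the start of the proof of that lemma) is not of general type, the lemma forces $q_{BB}$ to vanish identically on $\pi^*NS(B)$. Polarizing this identity yields $2\langle u,v\rangle = q_{BB}(u+v) - q_{BB}(u) - q_{BB}(v) = 0$ for all $u,v\in \pi^*NS(B)$, so $\pi^*NS(B)$ is a totally isotropic subspace of $(H^{1,1}(X,\R),q_{BB})$. The second step is to invoke Proposition \ref{signature BB}, which says that $q_{BB}$ has signature $(1,h^{1,1}(X)-1)$ on $H^{1,1}(X,\R)$; the Witt index of such a hyperbolic form is at most $1$, hence $\dim \pi^*NS(B)\leq 1$. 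The third step is to check non-triviality: for any ample $H\in NS(B)$, the class $L=\pi^*H$ is represented by a non-trivial effective divisor without fixed component (as recalled in Section \ref{fibrations HK}), and it is non-zero in $H^2(X,\R)$ because $L\cdot [\omega_X]^{\dim(X)-1}>0$ for any K\"ahler class $[\omega_X]$ on $X$. Combining the three steps, $\pi^*NS(B)$ is exactly a line, and by total isotropy it is contained in $\mathcal C_0$.

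The main difficulty in this argument is essentially already absorbed into Lemma \ref{pullback isotropic}; what remains is a standard polarization identity, the classical bound on the Witt index of a hyperbolic form, and a routine non-triviality check for the pullback of an ample class by a dominant meromorphic map. I do not anticipate any further subtlety specific to the corollary itself.
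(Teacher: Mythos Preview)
Your proof is correct and follows essentially the same route as the paper: invoke Lemma~\ref{pullback isotropic} to get isotropy, use the hyperbolic signature of $q_{BB}$ on $H^{1,1}(X,\R)$ to bound the dimension by $1$, and check non-triviality via the pull-back of an ample class. The only cosmetic differences are that you spell out the polarization and Witt-index argument explicitly and justify non-triviality by intersecting with a K\"ahler class, whereas the paper simply asserts that the only linear subspaces contained in the isotropic cone are lines and that a non-trivial effective divisor has non-trivial numerical class.
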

\begin{proof}
By Lemma \ref{pullback isotropic}, $\pi^*NS(B)$ is contained in the isotropic cone. The pull-back $L$ of an ample line bundle on $B$ is effective and non-trivial, so that its numerical class is also non-trivial; thus $\pi^*NS(B)$ cannot be trivial. To conclude it suffices to remark that $\pi^*NS(B)$ is a linear subspace of $H^{1,1}(X,\R)$, and the only non-trivial subspaces contained in the isotropic cone are lines. 
\end{proof}

\subsection{Density of orbits}

The following theorem was proven in \cite{MR2400885}.

\begin{thm}
\label{orbit fibration}
Let $X$ be a compact K\"ahler manifold and let $f\colon X\rato X$ be a dominant meromorphic endomorphism. Then there exists a dominant meromorphic map $\pi\colon X\rato B$ onto a compact K\"ahler manifold $B$ such that
\begin{enumerate}
\item $\pi \circ f=\pi$;
\item the general fibre $X_b$ of $\pi$ is the Zariski closure of the orbit by $f$ of a generic point of $X_b$.
\end{enumerate}
\end{thm}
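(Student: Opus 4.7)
The approach I would take is to construct $\pi$ as the "meromorphic reduction of $X$ modulo $f$-invariant functions," and then verify a posteriori that its generic fibres coincide with orbit closures. The two ingredients are (i) an algebraic-reduction construction applied to the $f$-invariant subfield of $\mathcal M(X)$, and (ii) a Barlet cycle-space argument to identify fibres with orbit closures.

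\textbf{Step 1 (Construction of $\pi$).} Let $K\subset \mathcal M(X)$ be the subfield of meromorphic functions on $X$ fixed by the pull-back $f^*$. Since $X$ is compact K\"ahler, $\mathcal M(X)$ has finite transcendence degree over $\C$, hence so does $K$; denote $d=\operatorname{tr.deg}_\C K$. Mimicking the classical algebraic-reduction construction but applied to $K$ instead of the whole $\mathcal M(X)$, one produces a normal compact complex space $B_0$ of dimension $d$ with $\mathcal M(B_0)\cong K$ and a dominant meromorphic map $X\rato B_0$ realizing this isomorphism on function fields. Because $K$ has maximal possible transcendence degree for a subfield realized this way, $B_0$ is Moishezon, hence projective after a resolution; replace it by a smooth projective model $B$. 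The pull-back $\pi^*\colon \mathcal M(B)\to \mathcal M(X)$ has image $K$, so every $h\in \mathcal M(B)$ satisfies $h\circ(\pi\circ f)=h\circ \pi$, which forces $\pi\circ f=\pi$. This gives property (1).

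\textbf{Step 2 (Fibres are orbit closures).} Take a very general point $x\in X$ and let $Y_x$ denote the Zariski closure of its forward orbit $\{f^n(x)\mid n\geq 0\}$. Since $\pi\circ f=\pi$, one has $Y_x\subseteq X_{\pi(x)}$. Suppose for contradiction that $Y_x\subsetneq X_{\pi(x)}$ on a non-empty open subset of $X$. The assignment $x\mapsto [Y_x]$ defines a meromorphic map from $X$ to the Barlet space $\mathcal B(X)$ of compact cycles of $X$; its image parametrizes an analytic family of $f$-invariant cycles whose generic member is strictly contained in a $\pi$-fibre. Stein-factorizing this map yields a dominant meromorphic fibration $\pi'\colon X\rato B'$ such that $\pi'\circ f=\pi'$ and the fibration $\pi$ factors through $\pi'$ with $\dim B'>\dim B$. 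Pulling back a meromorphic function on $B'$ not coming from $B$ produces an element of $\mathcal M(X)^f$ outside $K=\pi^*\mathcal M(B)$, contradicting the construction of $K$. Hence $Y_x=X_{\pi(x)}$ for generic $x$, which is property (2).

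\textbf{Main obstacle.} The genuinely delicate point is Step 2: making rigorous the claim that $x\mapsto [Y_x]$ defines a meromorphic map to the Barlet space, and that its Stein factorization produces a valid $f$-invariant K\"ahler fibration strictly refining $\pi$. This requires uniform control of $\dim Y_x$ for generic $x$, a constructibility statement for the family of orbit closures, and the fact that the relevant component of the Barlet space of a compact K\"ahler manifold is again K\"ahler. All of this technical machinery is exactly what is carried out in \cite{MR2400885}; in the sketch above it is treated as a black box, and eliminating that black box is where the real work of the proof lies.
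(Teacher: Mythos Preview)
The paper does not prove this theorem at all: it is quoted verbatim from \cite{MR2400885} and used as a black box. So there is no ``paper's own proof'' to compare against; what matters is whether your sketch is a faithful outline of the argument in the cited reference.

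Your two-step strategy has a genuine structural gap in the K\"ahler (non-projective) setting. In Step~2 you derive a contradiction by asserting that the refined base $B'$ carries a meromorphic function not pulled back from $B$, hence producing a new element of $\mathcal M(X)^{f}$ outside $K$. But $B'$ arises as (a Stein factor through) a component of the Barlet space of $X$; it is compact K\"ahler, not Moishezon in general. Nothing prevents $\dim B'>\dim B$ while $\mathcal M(B')=\mathcal M(B)$: the generic fibre of $B'\to B$ could have algebraic dimension~$0$ (think of a family of non-algebraic tori or K3 surfaces). In that case no new invariant meromorphic function appears and your contradiction evaporates. In other words, characterising $B$ via the invariant function field $K=\mathcal M(X)^{f}$ is too coarse for compact K\"ahler $X$: two genuinely different $f$-invariant fibrations can have the same pull-back function field.

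The argument in \cite{MR2400885} avoids this by reversing the logic: one constructs $\pi$ \emph{directly} from the cycle-space/Barlet machinery (taking orbit closures as the primary objects and using that the relevant component of the Barlet space of a compact K\"ahler manifold is again compact K\"ahler), and the maximality/uniqueness is phrased in terms of fibrations rather than function fields. Your ``Main obstacle'' paragraph correctly flags the Barlet-space step as the crux, but it is not merely a technical verification appended to a function-field construction; it is the construction itself. If you rewrite Step~1 to start from the cycle space and drop the appeal to $\mathcal M(X)^{f}$, your outline becomes the one in \cite{MR2400885}.
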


\begin{lemma}
\label{pullback pseudo-aut}
Let $\phi\colon X\rato Y$, $\psi\colon Y\rato Z$ be meromorphic maps between compact complex manifolds. If $\phi$ is an isomorphism in codimension $1$, then for all $D\in Div(Z)$
$$(\psi\circ \phi)^*D=\phi^*\psi^*D.$$
\end{lemma}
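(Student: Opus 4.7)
\emph{Proof plan.} The strategy is to exhibit an open subset $U'\subseteq X$ whose complement has codimension at least $2$, on which both $(\psi\circ\phi)^*D$ and $\phi^*\psi^*D$ reduce to the ordinary holomorphic pullback of $D$ along $\psi\circ\phi|_{U'}$. Since a Weil divisor on a smooth manifold is determined by its restriction to any such open set — no prime component can be supported in codimension $\geq 2$ — this will immediately yield the desired equality of divisors on $X$.

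To construct $U'$ I use the hypothesis that $\phi$ is an isomorphism in codimension $1$: there exist open sets $U\subseteq X$ and $V=\phi(U)\subseteq Y$, with complements of codimension $\geq 2$, such that $\phi|_U\colon U\to V$ is a biholomorphism. Since the indeterminacy locus $\mathrm{Ind}(\psi)$ has codimension $\geq 2$ in $Y$, the open set $V':=V\setminus \mathrm{Ind}(\psi)$ still has complement of codimension $\geq 2$ in $Y$, and therefore $U':=(\phi|_U)^{-1}(V')$ has complement of codimension $\geq 2$ in $X$. By construction $\phi$, $\psi$, and $\psi\circ\phi$ are all holomorphic over $U'$. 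I then pick a common smooth model $W\to X$ resolving the indeterminacy of $\psi\circ\phi$ and such that the induced map $W\to Y$ factors through a resolution $\tilde Y\to Y$ of $\psi$; both birational morphisms $W\to X$ and $\tilde Y\to Y$ are automatically isomorphisms over $U'$ and $V'$ respectively.

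Finally I verify that each side restricts correctly on $U'$. The proper pushforward of cycles commutes with restriction to open subsets over which the morphism is an isomorphism, so the defining formula $(\psi\circ\phi)^*D=\mu_{*}(\tilde\psi\circ\tilde\phi')^*D$ restricts on $U'$ to the ordinary pullback $(\psi\circ\phi|_{U'})^*D$; likewise $(\psi^*D)|_{V'}=(\psi|_{V'})^*D$, and then $(\phi^*\psi^*D)|_{U'}=(\phi|_{U'})^*(\psi|_{V'})^*D=(\psi\circ\phi|_{U'})^*D$. The only genuine bookkeeping is this compatibility between cycle-theoretic pushforward and restriction to loci where the birational morphisms are isomorphisms; the essential content of the lemma, and where the assumption on $\phi$ enters, is precisely that $\phi$ has no exceptional divisors, which prevents any discrepancy from exceptional contributions of the kind that generically appear when composing arbitrary birational maps.
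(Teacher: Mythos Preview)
Your proposal is correct and follows essentially the same approach as the paper's proof: both arguments construct an open set $U'\subset X$ with complement of codimension $\geq 2$ on which $\phi$, $\psi$ and $\psi\circ\phi$ are all holomorphic, and then use that a divisor on a smooth manifold is determined by its restriction to such an open set. The paper phrases this last step via the closure formula $\phi^*D_Y=\overline{\phi|_U^*(D_Y\cap V)}$ and a direct chain of equalities, whereas you unpack the definition of meromorphic pull-back through explicit resolutions $W\to X$ and $\tilde Y\to Y$; this is slightly heavier bookkeeping but amounts to the same verification.
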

\begin{proof}
Let $U\subset X$, $V\subset Y$ two open sets such that $\phi$ induces an isomorphism $U\cong V$ and such that $\codim (X\setminus U)\geq 2,\codim(Y\setminus V)\geq 2$. It is easy to see that, for every effective divisor $D_Y\in Div(Y)$, we have an equality $\phi^*D_Y=\overline{\phi|_U^*(D_Y\cap V)}$; therefore the equality is true for every divisor in $Div(Y)$.\\
Up to shrinking $V$ to some other open subset whose complement has codimension at least $2$, we can suppose that $\psi$ is regular on $V$; therefore the composition $\psi\circ \phi$ is regular on $U$, and since the complement of $U$ has codimension $\geq 2$ in $X$, for all $D\in Div(Z)$ we have
$$(\psi\circ \phi)^*D=\overline{(\psi\circ \phi)|_U^*D}=\overline{\phi|_U^*(\psi|_V^*D)}=\overline{\phi|_U^*(\psi^*D\cap V)}=\phi^*\psi^*D,$$
where the third equality follows again from the fact that the complement of $V$ has codimension at least $2$ in $Y$. This proves the claim.
\end{proof}

Let us prove point $(3)$ of the Main Theorem.

\begin{lemma}
\label{dense orbits}
Let $f\colon X\rato X$ be a bimeromorphic loxodromic transformation of an irreducible symplectic manifold. Then the generic orbit of $f$ is Zariski-dense.
\end{lemma}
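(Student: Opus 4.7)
The plan is to apply Theorem \ref{orbit fibration} to reduce the statement to ruling out the existence of a non-trivial $f$-invariant fibration with trivial base action, and then to derive a contradiction from the loxodromic classification via the Beauville-Bogomolov form.

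First, I would apply Theorem \ref{orbit fibration} to obtain a dominant meromorphic map $\pi \colon X \rato B$ onto a compact K\"ahler manifold such that $\pi \circ f = \pi$ and the general fibre is the Zariski closure of the $f$-orbit of its generic point. It then suffices to prove that $\dim B = 0$. The case $\dim B = \dim X$ can be eliminated quickly: the generic fibre would be a finite set of bounded cardinality $N$, and since $\pi \circ f = \pi$ the map $f$ permutes each generic fibre, so $f^{N!}$ acts as the identity on a dense open subset and hence $f^{N!} = \mathrm{id}_X$, contradicting $\lambda_1(f) > 1$.

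Next, suppose for contradiction that $0 < \dim B < \dim X$. By Remark \ref{projective base}, $B$ is projective. Since $f$ is a pseudo-automorphism (Proposition \ref{pseudo-aut}), Lemma \ref{pullback pseudo-aut} applied to $\pi \circ f = \pi$ yields $f^* \pi^* D = \pi^* D$ for every divisor $D$ on $B$, so $f^*$ fixes the subspace $\pi^* NS(B) \subset H^{1,1}(X,\R)$ pointwise. I would then split into two cases according to Lemma \ref{pullback isotropic}. If the generic fibre of $\pi$ is of general type, then, taking $H$ ample on $B$ and $L = \pi^* H$, the proof of Lemma \ref{pullback isotropic} gives $q_{BB}(L) > 0$, so $\R L$ is an $f^*$-invariant line in $\mathcal C_{\geq 0} \setminus \mathcal C_0$, contradicting Theorem \ref{characterization birational}, which asserts that every $f^*$-invariant line in $\mathcal C_{\geq 0}$ lies in $\mathcal C_0$. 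If the generic fibre is not of general type, then by Corollary \ref{pullback line}, $\pi^* NS(B)$ is a line $\ell \subset \mathcal C_0 \subset \mathcal C_{\geq 0}$, again fixed pointwise by $f^*$. But by Theorem \ref{characterization birational} the only $f^*$-invariant lines in $\mathcal C_{\geq 0}$ are the eigenlines for the eigenvalues $\lambda > 1$ and $1/\lambda$, on neither of which $f^*$ acts trivially. Both subcases yield a contradiction, forcing $\dim B = 0$.

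The main technical delicacy I expect is the manipulation of pull-backs through meromorphic (not regular) maps in the identity $f^* \pi^* = \pi^*$; this is exactly what Lemma \ref{pullback pseudo-aut} handles, and its hypothesis—that $f$ be an isomorphism in codimension one—is furnished by Proposition \ref{pseudo-aut} for bimeromorphic transformations of irreducible symplectic manifolds. Once this identity is in hand, the argument reduces to a brief application of the loxodromic classification on $H^{1,1}(X,\R)$.
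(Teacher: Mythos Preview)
Your proof is correct and follows the same overall strategy as the paper's: invoke Theorem \ref{orbit fibration} to obtain $\pi$ with $\pi\circ f=\pi$, use Lemma \ref{pullback pseudo-aut} to see that $f^*$ fixes $\pi^*NS(B)$ pointwise, and then contradict the loxodromic classification of Theorem \ref{characterization birational}. The difference lies in how the contradiction is reached. You case-split on whether the generic fibre is of general type and appeal to Lemma \ref{pullback isotropic} and Corollary \ref{pullback line}; the paper avoids this detour entirely. It simply observes that for $H$ ample on $B$ the class $L=\pi^*H$ has no fixed component, so $q_{BB}(L)\geq 0$ by the elementary Remark \ref{non-fixed q non-negative}, whereas any subspace fixed pointwise by $f^*$ must be negative definite for $q_{BB}$ (the only $f^*$-invariant lines in $\mathcal C_{\geq 0}$ being the $\lambda$- and $\lambda^{-1}$-eigenlines, on which $f^*$ does not act trivially). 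Thus the paper's argument is shorter and does not need the Kodaira dimension of the fibre at all; your route works but invokes heavier machinery than necessary. Your explicit handling of the case $\dim B=\dim X$ is a small gain in clarity over the paper, which leaves this case implicit.
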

\begin{proof}
If the claim were false, then by Theorem \ref{orbit fibration} we could construct a commutative diagram\\
\centerline{\xymatrix{
X \ar@{-->}[d]^\pi \ar@{-->}[r]^f & X \ar@{-->}[d]^\pi\\
B \ar[r]^{id} & B
}}
where $\pi$ is a meromorphic map whose general fibre $X_b$ coincides with the Zariski-closure of the $f$-orbit of a generic point of $X_b$. Remark \ref{projective base} applies in the case where the fibres are not connected; therefore the base $B$ is projective.\\
Now, remark that $f^*$ acts as the identity on the space $\pi^*NS(B)\subset NS(X)$, which is defined on $\Q$: indeed, for $v\in NS(B)$, we have
$$f^*\pi^*v=(\pi\circ f)^*v=(id_B \circ \pi)^* v =\pi^*v,$$
where the first equality follows from Lemma \ref{pullback pseudo-aut}; by Theorem \ref{characterization birational}, the Beauville-Bogomolov form is negative definite on $\pi^*NS(B)$.\\
Now, let $H\in Pic(B)$ be an ample line bundle and let $L=\pi^*H$. We have seen in \ref{fibrations HK} (again the hypothesis on fibres being connected was irrelevant) that $L$ is a numerically non-trivial line bundle such that $q_{BB}(L)\geq 0$, contradiction. This proves the claim.

\end{proof}

\subsection{The key lemma}

The following key lemma, together with the Proposition in Section \ref{intro}, implies the Main Theorem.

\begin{lemma}[Key lemma]
\label{key lemma}
Let $X$ be an irreducible symplectic manifold, $f\colon X\rato X$ a loxodromic bimeromorphic transformation and $\pi\colon X \rato B$ a meromorphic $f$-invariant fibration onto a compact K\"ahler manifold. Then $X$ is projective and the generic fibre of $\pi$ is of general type.
\end{lemma}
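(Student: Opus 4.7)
The approach is by contradiction, leveraging the rigidity of loxodromic isometries on the Beauville--Bogomolov lattice. Suppose the generic fibre of $\pi$ is \emph{not} of general type. By Remark \ref{projective base} the base $B$ is projective, so we may pick an ample class $H\in NS(B)$ and set $L=\pi^*H\in NS(X)$. By Corollary \ref{pullback line}, the subspace $\pi^*NS(B)\subset H^{1,1}(X,\R)$ is a line contained in the isotropic cone $\mathcal C_0$, and this line is spanned by $L$.

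Next I would show that $\R L$ is an $f^*$-invariant line, and that it is defined over $\Q$. The rationality is immediate since $L$ is an integral class. For the invariance, recall that $f$ is a pseudo-automorphism by Proposition \ref{pseudo-aut}, so Lemma \ref{pullback pseudo-aut} applies to give
\[
f^*L=f^*\pi^*H=(\pi\circ f)^*H=(g\circ\pi)^*H=\pi^*(g^*H)\in \pi^*NS(B)=\R L,
\]
where the last identification uses functoriality of pull-back of divisor classes along $g\circ \pi$ (equivalently: because $f$ permutes the fibres of $\pi$ via $g$, a class pulled back from $B$ stays pulled back from $B$).

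Now I would invoke Theorem \ref{characterization birational} to derive a contradiction. Since $f$ is loxodromic, the only $f_1^*$-invariant lines contained in $\mathcal C_{\geq 0}$ are the two eigenlines of $\lambda_1(f)$ and $\lambda_1(f)^{-1}$, and neither of these is defined over $\Q$. The line $\R L$ therefore cannot exist, so the generic fibre of $\pi$ must be of general type. Lemma \ref{pullback isotropic} then automatically delivers the projectivity of $X$.

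All the heavy machinery has been assembled in advance: the only real substance lies in the three-way interplay between the $f$-equivariance of $\pi$, the pseudo-automorphism property (which makes pull-back behave functorially), and the arithmetic rigidity of the loxodromic eigenlines. The one place that calls for a little extra care is the chain of equalities $f^*\pi^*H=\pi^*g^*H$: the identity $f^*\pi^*=(\pi\circ f)^*$ follows from Lemma \ref{pullback pseudo-aut} since $f$ is an isomorphism in codimension one, while $(g\circ\pi)^*H=\pi^*(g^*H)$ must be checked directly on a resolution of the indeterminacy locus of $\pi$, because $\pi$ itself is not an isomorphism in codimension one and thus lies outside the scope of that lemma.
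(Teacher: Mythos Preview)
Your overall strategy is exactly the paper's: manufacture an $f^*$-invariant isotropic line in $NS_\R(X)$ defined over $\Q$ and contradict Theorem~\ref{characterization birational}. The gap is precisely where you flag it, and it is not a formality. The identity $(g\circ\pi)^*H=\pi^*(g^*H)$ lies outside the scope of Lemma~\ref{pullback pseudo-aut} because $\pi$ is not an isomorphism in codimension $1$, and in general it can fail. Concretely, resolve $g$ by $\rho\colon\tilde B\to B$ with induced morphism $\tilde g\colon\tilde B\to B$, and write $\tilde g^*H=\rho^*(g^*H)+\sum a_iE_i$ for exceptional divisors $E_i$. Setting $\tilde\pi=\rho^{-1}\circ\pi\colon X\rato\tilde B$, one computes
\[
(g\circ\pi)^*H=\tilde\pi^*\tilde g^*H=\pi^*(g^*H)+\sum a_i\,\tilde\pi^*E_i,
\]
and nothing forces $\tilde\pi^*E_i$ to vanish or to lie in $\pi^*NS(B)$: this depends on whether $\pi$ has divisorial fibres over the indeterminacy locus of $g$. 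So you have not shown that $\R L$ is $f^*$-invariant.

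The paper avoids this by enlarging the candidate subspace: it takes $V$ to be the span of \emph{all} $(h\circ\pi)^*NS_\R(B)$ over birational $h\colon B\rato B$. Then $f^*$-invariance is automatic from $f^*(h\circ\pi)^*=(h\circ g\circ\pi)^*$ (Lemma~\ref{pullback pseudo-aut}), and the real work shifts to proving $V\subset\mathcal C_0$, i.e.\ $q_{BB}\bigl((h_i\circ\pi)^*w_i,(h_j\circ\pi)^*w_j\bigr)=0$. This is exactly the computation above read differently: both classes lie in $\tilde\pi^*NS(\tilde B)$ for a single fibration $\tilde\pi$ whose generic fibre is still not of general type, so Lemma~\ref{pullback isotropic} applies to $\tilde\pi$. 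Your argument is repaired the same way: rather than asserting $f^*L\in\pi^*NS(B)$, observe that $L=\tilde\pi^*\rho^*H$ and $f^*L=\tilde\pi^*\tilde g^*H$ both lie in $\tilde\pi^*NS(\tilde B)$, which is an isotropic line by Corollary~\ref{pullback line}, hence $f^*L\in\R L$ after all.
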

\begin{proof}
Let $g\colon B\rato B$ be a bimeromorphic transformation such that $g\circ \pi=\pi\circ f$.

Let us define
$$V:=Span\left\{(h \circ \pi)^*NS_\R(B)| h \colon B\rato  B  \text{ birational transformation}\right\}\subset NS_\R(X).$$
The linear subspace $V$ is clearly defined over $\Q$. Since the pull-back by $\pi$ of an ample class is numerically non-trivial, we also have $V\neq\{0\}$.\\
Furthermore, $V$ is $f^*$-invariant: if $v=(h\circ \pi)^*w$ for some $w\in NS(B)$ and for some birational transformation $h\colon B\rato B$, then
$$f^*v=f^*(h\circ \pi)^* w=(h\circ \pi\circ f)^*w=(h\circ g\circ \pi)^*w=(\tilde h \circ \pi)^*w,$$
where $\tilde h =h\circ g\colon B\rato B$ is a birational transformation and the second equality follows from Lemma \ref{pullback pseudo-aut}.

Now suppose that the generic fibre of $\pi$ is not of general type; we are first going to show that $V$ is contained in the isotropic cone $\mathcal C_0=\{v\in H^{1,1}(X,\R)| q_{BB}(v)=0\}$. The generic fibre of the meromorphic fibration $h\circ \pi$ is bimeromorphic to that of $\pi$. By Lemma \ref{pullback isotropic} we know that $(h\circ \pi)^*NS_\R(B)$ is contained in the isotropic cone for all birational transformations $h\colon B\rato B$. We just need to show that for all birational transformations of $B$ onto itself $h_i,h_j$ and for all $w_i, w_j\in NS_\R(B)$ we have
$$q_{BB}((h_i\circ \pi)^*w_i, (h_j\circ \pi)^*w_j)=0.$$
Let $h=h_j\circ h_i^{-1}$, and let $\rho \colon \tilde B\to B$ be a resolution of the indeterminacy locus of $h$; denote by $\tilde h \colon \tilde B \to B$ the induced holomorphic transformation, and let $\tilde \pi=\rho^{-1} \circ h_i \circ \pi\colon X\rato \tilde B$; $\tilde \pi$ is a meromorphic fibration onto the birational model $\tilde B$, whose generic fibre is bimeromorphic to that of $g$. Finally, let $\eta\colon \tilde X\to X$ be a resolution of singularities of $\tilde \pi$ and let $\nu\colon \tilde X\to B$ be the induced holomorphic map.

\centerline{
\xymatrix{
\tilde X \ar[dd]^\eta \ar[rd]^\nu&&\\
&\tilde B \ar[d]^\rho \ar[dr]^{\tilde h}&\\
X \ar@{-->}[r]^{h_i\circ \pi} \ar@{-->}[ru]^{\tilde\pi}& B \ar@{-->}[r]^h &B
}}

Now it is clear that $\eta\colon \tilde X\to X$ is a resolution of singularities of both $h_i\circ \pi$ and $h_j\circ \pi=h\circ h_i\circ \pi$. Therefore
$$(h_i\circ \pi)^*w_i=\eta_*\nu^*\rho^*w_i=\tilde \pi^*\rho^*w_i\in \tilde \pi^*NS(\tilde B)$$
and
$$(h_j\circ \pi)^*w_j=\eta_*\nu^*\tilde h^*w_j=\tilde\pi^* \tilde h^*w_j\in \tilde \pi^* NS(\tilde B).$$
Since the fibres of $\tilde \pi$ are not of general type, it suffices to apply Lemma \ref{pullback isotropic} to the fibration $\tilde \pi\colon X\rato \tilde B$ to conclude that $q_{BB}((h_i\circ \pi)^*w_i, (h_j\circ \pi)^*w_j)=0$. This proves that $V$ is contained in the isotropic cone.

Now the only non trivial vector subspaces of $NS_\R(X)$ contained in the isotropic cone are lines; by Theorem \ref{characterization birational}, $V$ is then an $f^*$-invariant line contained in the isotropic cone and not defined over $\Q$. But this contradicts the definition of $V$. We have thus proved that the generic fibre of $\pi$ is of general type.

In order to prove that $X$ is projective it suffices to apply the last part of Lemma \ref{pullback isotropic}.


\end{proof}

By \cite[Corollary 14.3]{MR0506253}) we know that the group of birational transformations of a variety of general type is finite. Therefore, we expect the dynamics of $f$ on the fibres to be simple.

\subsection{Relative Iitaka fibration}
\label{relative fibration}

Before giving the proof of the Main Theorem, we are going to recall the basic results about the relative Iitaka fibration. We will follow the approach of \cite{tsuji2010global} with some elements from \cite{MR0506253}. See also \cite{MR3075000}, \cite{MR0217084}.

Let $X$ be a smooth projective variety, and suppose that some multiple of $K_X$ has some non trivial section. Recall that, for $m>0$ divisible enough, the rational map
\begin{align*}
\phi_{|mK_X|} \colon X & \rato \Pj H^0(X,mK_X)^\vee\\
p & \mapsto \{s\in \Pj H^0(X,mK_X) | s(p)=0\}
\end{align*}
has connected fibres. Moreover the rational map $\phi_{|mK_X|}$ eventually stabilize to a rational fibration that we call \emph{canonical fibration} of $X$.

\begin{rem}
\label{induced linear}
If $f\colon X\rato X$ is a bimeromorphic transformation of $X$, the pull-back of forms induces a linear automorphism $f^*\colon H^0(X,mK_X)\to H^0(X,mK_X)$. For example, for $m=1$ a section $\sigma \in H^0(X,K_X)$ is a holomorphic $d$-form ($d=\dim X$); $f$ is defined on an open set $U\subset X$ such that $X\setminus U$ has codimension at least $2$. Therefore by Hartogs theorem the pull-back $f|_U^*\sigma$ can be extended to $X$. It is easy to see that the construction is invertible and induces a linear automorphism of $\Pj H^0(X,mK_X)^\vee$ which commutes with the Iitaka fibration:

\centerline{
\xymatrix{
 X \ar@{-->}[r]^f \ar@{-->}[d]^{\phi_{|mK_X|}}& X \ar@{-->}[d]^{\phi_{|mK_X|}}\\
\Pj H^0(X,mK_X)^\vee \ar[r]^{\tilde f}& \Pj H^0(X,mK_X)^\vee
}}
\end{rem}

The above construction can be generalized to the relative setting: let $\pi\colon X\to B$ be a regular fibration onto a smooth projective variety $B$, and let $K_{X/B}=K_X\otimes \pi^*K_B^{-1}$ be the relative canonical bundle.\\
For some fixed positive integer $m>0$ (divisible enough), let $\mathcal S=\pi_*(mK_{X/B})^{\vee}$. $\mathcal S$ is a coherent sheaf over $B$; therefore one can construct (generalizing the construction of the projective bundle associated to a vector bundle, see \cite{MR0506253} for details) the algebraic projective fibre space
$$\eta \colon \Proj(\mathcal S)\to B$$
associated to $\mathcal S$, which is a projective scheme (a priori neither reduced nor irreducible) $Y$ over $B$. Its generic geometric fibre $Y_b$ over a generic point $b\in B$ is canonically isomorphic to $\Pj H^0(X_b, mK_{X_b})^\vee $. The Iitaka morphisms $\phi_b\colon X_b\rato \Pj H^0(X_b, mK_{X_b})^\vee$ induce a rational map $\phi\colon X\rato Y$ over $B$.\\
The \emph{relative canonical fibration} of $X$ with respect to $\pi$ is
\begin{align*}
\phi\colon X & \rato Y\\
x\in X_b &\mapsto \left[\{s\in H^0(X_b; mK_{X_b})| s(x)=0\}\right]\in Y_b.
\end{align*}
It can be shown that, for $m$ divisible enough:
\begin{itemize}
\item $\phi$ stabilizes to a certain rational fibration;
\item the image by $\phi$ of the generic fibre $X_b=\pi^{-1}(b)$ of $\pi$ is contained inside the fibre $\eta^{-1}(b)$ of the natural projection $\eta\colon Y\to B$;
\item the restriction of $\phi$ to a generic fibre $X_b$ is birationally equivalent to the canonical fibration of $X_b$.
\end{itemize}

\begin{rem}
\label{induced linear relative}
The construction in Remark \ref{induced linear} can also be generalized to the relative setting: let $f\colon X\rato X$ and $g\colon B\rato B$ be birational transformations such that $\pi\circ f=g\circ \pi$. For a generic $b\in B$ define
\begin{align*}
\tilde f|_{Y_b}\colon \Pj H^0(X_{b},mK_{X_{b}})^\vee &\rato \Pj H^0(X_{g(b)},mK_{X_{g(b)}})^\vee\\
[s^*]&\mapsto \left\{  [s]\in  \Pj H^0(X_{g(b)},mK_{X_{g(b)}})| s^*(f^*s)=0  \right\}.
\end{align*}
These are well defined linear automorphisms because, for a fibre $X_b$ of $\pi$ not contained in the indeterminacy locus of $f$, the restriction $f\colon X_b \rato X_{g(b)}$ is a birational map, and thus induces a linear isomorphism 
$$f^*\colon H^0(X_{g(b)},mK_{X_{g(b)}})\to H^0(X_b,mK_{X_b}).$$
Furthermore the $\tilde f_{X_b}$ can be glued to a birational transformation $\tilde f\colon Y\rato Y$ such that $\eta\circ \tilde f=g\circ \eta$.
\end{rem}

Now suppose the generic fibre of $\pi$ is of general type. Since the restriction of $\phi$ to a generic fibre of $g$ is birational onto its image and the images of fibres are disjoint, $\phi$ itself must be birational onto its image; denote by $Z$ the closure of the image of $\phi$ and let $f_Z=\phi\circ f\circ \phi^{-1} \colon Z\rato Z$ be the birational transformation induced by $f$. \\
By the above Remark, $f_Z$ is the restriction of the birational transformation $\tilde f\colon Y \rato Y$. In particular $f_Z$ induces an isomorphism between generic fibres of $\eta|_Z$.

\centerline{\xymatrix{
X \ar@{-->}[d]^\pi \ar@{-->}[r]^\phi  \ar@(ul,ur)[]|{f} & Z \ar@(ul,ur)[]|{f_Z} \ar@{^{(}->}[r] \ar@{-->}[ld]& Y \ar@(ul,ur)[]|{\tilde f} \ar@{-->}[lld]^{\eta}\\
B \ar@(dl,ul)[]|{g}
}}
\vspace{0.3cm}

\subsection{Proof of the Main Theorem}

\begin{lemma}
\label{dense orbit isotrivial}
Let $X,B$ be projective manifolds, $f\colon X\rato X$ and $g\colon B\rato B$ birational transformations and $\pi\colon X\to B$ a fibration such that $\pi\circ f=g\circ \pi$.\\
\centerline{\xymatrix{
X \ar[d]^\pi \ar@{-->}[r]^f & X \ar[d]^\pi\\
B \ar@{-->}[r]^g & B
}}\\
Assume that the generic fibre of $\pi$ is of general type and that
 the generic orbit of $g$ is Zariski-dense. Then all the fibres over a non-empty Zariski open subset of $B$ are isomorphic.
\end{lemma}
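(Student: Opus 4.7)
The plan is to descend to the family of relative canonical models, where $f$ acts fibrewise by honest biregular isomorphisms, and then to spread the isomorphism class of a single fibre over the Zariski-dense $g$-orbit via a constructibility argument on the relative Isom scheme.

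First, since the generic fibre of $\pi$ is of general type, Section \ref{relative fibration} provides a birational map $\phi\colon X \rato Z \subset Y$ over $B$ (with $Y=\Proj(\mathcal S)$), under which $f$ descends to a birational transformation $f_Z\colon Z\rato Z$. The crucial point emphasized at the end of that section is that $f_Z$ is the restriction of a map $\tilde f\colon Y\rato Y$ acting fibrewise by projective-linear isomorphisms $Y_b\xrightarrow{\sim}Y_{g(b)}$; consequently $f_Z$ induces \emph{biregular} isomorphisms $Z_b\xrightarrow{\sim}Z_{g(b)}$ between generic fibres of the relative canonical model.

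Next, I would consider the relative Isom scheme $I=\mathrm{Isom}_{B\times B}(p_1^*Z,p_2^*Z)\to B\times B$ representing isomorphisms between pairs of fibres of $Z\to B$; it is a quasi-projective scheme by the standard moduli theory of canonically polarized varieties. Let $P\subset B\times B$ denote the image of its structural morphism, which is constructible by Chevalley's theorem. The first step shows that $(b,g^n(b))\in P$ for every $n\in\Z$ and generic $b$; the Zariski-density of the $g$-orbit of such a $b$ then forces $P\cap(\{b\}\times B)$ to be a dense constructible subset of $\{b\}\times B\cong B$, which must therefore contain a non-empty Zariski open $U\subset B$. Hence $Z_{b'}\cong Z_b$ for every $b'\in U$, and $Z\to U$ is isotrivial.

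The last step is to transfer this isotriviality from the canonical model family $Z\to U$ back to $\pi\colon X\to B$. After further shrinking $U$, each $X_b$ ($b\in U$) is a smooth projective variety of general type whose canonical model is the fixed variety $F=Z_b$. I expect this ``birational-to-isomorphic'' passage to be the main technical obstacle: in dimension two it follows at once from the uniqueness of the smooth minimal model of a surface of general type, while in higher dimensions one should exploit the $f$-equivariance of the situation---the birational maps $f^n|_{X_b}\colon X_b\rato X_{g^n(b)}$ yield a rigid (by finiteness of $\Bir(X_b)$) ``family'' of birational identifications along the $g$-orbit---to upgrade the birational equivalence of the $X_b$'s to an honest isomorphism over a dense open of $U$.
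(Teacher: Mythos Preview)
Your first two steps are essentially the paper's approach: pass to the relative canonical model $Z\subset Y$, where $f$ acts by projective-linear isomorphisms between fibres, and then use an Isom scheme together with Chevalley to spread the isomorphism class along the dense $g$-orbit. The paper works with $\mathfrak{Isom}_B(X,F\times B)$ over $B$ (for a fixed fibre $F$) rather than your $\mathrm{Isom}_{B\times B}(p_1^*Z,p_2^*Z)$, but this is cosmetic. More substantively, the paper does not appeal to ``standard moduli theory of canonically polarized varieties'' to get finite type: it fixes the specific relatively ample line bundle $L=H_Y|_X\boxtimes_B H_F$ and checks explicitly that the projective-linear action of $\tilde f$ pulls $H_Y|_{X_{g(b)}}$ back to $H_Y|_{X_b}$, so the Hilbert polynomial of the graph is constant along the orbit and one may work inside a single quasi-projective component $\mathfrak I^P$. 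Your justification via canonical polarization is morally the same, but you should make it explicit.

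Your third step is both where you get stuck and entirely unnecessary. The paper simply declares at the outset ``we are going to identify $X$ with its birational model $\overline{\phi(X)}$'' and never returns to the original fibres; the conclusion is that the fibres of $Z\to B$ (i.e.\ the canonical models of the $X_b$) are isomorphic over a dense open set, and this is all that the subsequent Proposition uses. There is no attempt---and no need---to upgrade birational equivalence of the original $X_b$'s to biregular isomorphism. So drop step three and make the identification $X\leadsto Z$ at the very beginning.
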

\begin{proof}

Denote as before
$$\phi\colon X \rato Y$$
the relative Iitaka fibration. We are going to identify $X$ with its birational model $\overline{\phi(X)}$.\\
Let $F=\pi^{-1}(b_0)$ be the fibre of $\pi$ over a point $b_0$ whose orbit is Zariski-dense in $B$, and let 
$$\mathfrak{I}:=\mathfrak{Isom}_B(X,F\times B)$$
be the $B$-scheme of isomorphisms over $B$ between $X$ and $F\times B$; the fibre $\mathfrak I_b$
parametrizes the isomorphisms $X_b\cong F$. We can realize $\mathfrak I$ as an open subset of the Hilbert scheme $\mathfrak{Hilb}_B(X\times_B (B\times F))$ by identifying a morphism $X_b\to F$ with its graph in $X_b\times F$. Therefore, 
$$\mathfrak I=\coprod _{P\in \Q[\lambda]} \mathfrak I^P,$$
where the fibre $\mathfrak I^P_b$ is the (a priori non irreducible and non reduced) quasi-projective scheme of (graphs of) isomorphisms $X_b\xrightarrow{\sim} F$
having fixed Hilbert polynomial $P(\lambda)$; such polynomials are calculated with respect to the restriction to the fibre $X_b\times F$ of a fixed line bundle $L$ on $X\times_B (B\times F)$ relatively very ample over $B$. We shall fix 
$$L=H_Y|_{X} \boxtimes_B H_F,$$
where $H_Y$ is a very ample line bundle on $Y$ and $H_F$ is a very ample line bundle on $F$.\\
Now, the pull-back of forms by $f$ induces a linear isomorphism
$$\tilde f_b\colon \Pj H^0(X_b, mK_{X_b})^\vee \xrightarrow{\sim} \Pj H^0(X_{g(b)}, mK_{X_{g(b)}})^\vee $$
between fibres of $\eta\colon Y\xrightarrow{\sim} B$, which restricts to an isomorphism $X_b\to X_{g(b)}$; under the canonical identification of fibres of $\eta$ with $\Pj^N$, $H_Y|_{Y_b}\cong \mathcal O_{\Pj^N}(d)$ (meaning that the section $H_Y|_{Y_b}$ has degree $d$)
 for some $d>0$ independent of the fibre. Under the identification, the action of $\tilde f_b$ is linear, so that $\tilde f_b^*(H_Y|_{Y_{g(b)}})$ also has degree $d$ on $\Pj^N$. In particular we have
$$\tilde f_b^*(H_Y|_{X_{g(b)}})=H_Y|_{X_b}.$$
Now take any isomorphism $X_{b_0}\xrightarrow{\sim} F$, which we can identify with its graph $\Gamma\subset X_{b_0}\times F$; the image of $\Gamma$ by the isomorphism $\tilde f_{b_0}\times id_F\colon X_{b_0}\times F\xrightarrow{\sim} X_{g(b_0)}\times F$ is the graph $\Gamma'$ of an isomorphism $X_{g(b_0)}\xrightarrow{\sim} F$. Furthermore, since $(\tilde f_{b_0}\times id_F)^*(L|_{X_{g(b_0)}\times F})=L|_{X_{b_0}\times F}$, $\Gamma'$ has the same Hilbert polynomial as $\Gamma$. Iterating this reasoning we find that for some $P\in \Q[\lambda]$ the image of the natural morphism $\psi\colon \mathfrak I^P\to B$ is Zariski-dense.\\
By Chevalley's theorem (\cite[Theorem 3.16]{MR1416564}) we also know that $\psi(\mathfrak I^P)$ is constructible; since every constructible Zariski-dense subset of an irreducible scheme contains a dense open set \cite[Proof of Theorem 3.16]{MR1416564}, we have $X_b\cong F$ for all $b$ in an open dense subset of $B$. This concludes the proof.
\end{proof}

\begin{proof}[Proof of the Proposition in Section \ref{intro}]

By Lemma \ref{dense orbit isotrivial}, all the fibres over a dense open subset $U\subset B$ are isomorphic, which shows $(1)$.  By \cite[Proposition 2.6.10]{MR2247603}, there exists an étale cover $\epsilon \colon U'\to U$ such that the induced fibration $X'_{U'}:=X\times_U U'$ is trivial: 
$$X'_{U'}\cong U'\times F.$$
This shows $(2)$.

Now suppose that the generic fibre $F$ is of general type. This implies that the group $G:=\Aut(F)$ is finite; therefore, for any $x\in F$, we can define the subvariety 
$$W^x:=U'\times G\cdot x\subset X'_{U'}.$$
We are going to show that the image of $W^x$ by the cover $\epsilon_X\colon X'_{U'}\to X_U$ is $f$-invariant.

Remark that the fibration $\pi_U\colon X_U\to U$ is locally trivial in the euclidean topology. Let $\{U_i\}_{i\in I}$ be a covering of $U$ by euclidean open subsets such that the restriction of the fibration to each $X_{U_i}$ is trivial: there exist biholomorphisms $X_{U_i}\cong U_i\times F$. Then the subvarieties 
$$V_i^x:=U_i\times  G\cdot x \subset X_{U_i}$$
patch together to algebraic subvarieties $V^x$ of $X_U$ which are exactly the images of the $W^x$.\\
Now we will prove that the varieties $V^x$ are $f$-invariant. Let $p\in X_U$ be a point where $f$ is defined and such that $g$ is defined on $\pi(p)$, and let $i\in I$ be such that $p\in X_{U_i}$; up to shrinking $U_i$, we can suppose that $g(U_i)\subset U_j$. By an identification $X_{U_i}\cong U_i\times F, X_{U_j}\cong U_j\times F$, we can write $f(x,y)=(g(x),h(x,y))$; here, for all $x$ in on open dense subset of $U_i$, the continuous map $b\mapsto h(x,\bullet)\in \Bir(F)$ is well defined. Since $\Bir(F)$ is a finite (hence discrete) group, $h$ doesn't depend on $x$, which shows that all the varieties $V^x$ are $f$-invariant. 

Now remark that the varieties $\epsilon_X(U'\times G\cdot \{x\})$ are the disjoint union of varieties of type $\epsilon_X(U'\times \{y\})$; since the first are $f$-invariant, the latter must be $f$-periodic, which concludes the proof.

\end{proof}

\begin{proof}[Proof of the Main Theorem, point (1)]
Let $X$ be an irreducible symplectic manifold, $f\colon X\rato X$ a birational loxodromic transformation, and suppose by contradiction that $f$ is imprimitive: there exist thus a meromorphic fibration $\pi\colon X\rato B$ and a bimeromorphic transformation $g\colon B\rato B$ such that $\pi\circ f=g\circ \pi$.\\
By Lemma \ref{key lemma}, $X$ is projective and the generic fibre of $\pi$ is of general type. However, we also know by Lemma \ref{dense orbits} that the generic orbit of $f$ is Zariski-dense; therefore, by the Proposition in Section \ref{intro}, the generic fibre of $\pi$ cannot be of general type, a contradiction.
\end{proof}

\section{Invariant subvarieties}
\label{sec: invariant subvarieties}


Let $X$ be a compact complex manifold. If $f\colon X\to X$ is an automorphism, we say that a subvariety $W\subset X$ is \emph{invariant} if $f(W)=W$, or, equivalently, if $f^{-1}(W)=W$. We say that $W\subset X$ is \emph{periodic} if it is invariant for some positive iterate $f^n$ of $f$.\\
Now let $f\colon X\rato X$ be a pseudo-automorphism of $X$ (i.e. a bimeromorphic transformation which is an isomorphism in codimension $1$). We say that a hypersurface $W\subset X$ is invariant if the strict transform $f^*W$ of $W$ is equal to $W$ (as a set); since $f$ and $f^{-1}$ don't contract any hypersurface, this is equivalent to $f(W)=W$ (here $f(W)$ denotes the analytic closure of $f|_U(W\cap U)$, where $U\subset X$ is the maximal open set where $f$ is well defined). We say that a hypersurface is periodic if it is invariant for some positive iterate of $f$.


The following Theorem is a special case of \cite{MR2727612}[Theorem B].

\begin{thm}
\label{invariant hypersurfaces fibration}
Let $f\colon X\rato X$ be a pseudo-automorphism of a compact complex manifold $X$. If $f$ admits at least $\dim(X)+b_2(X)-1$ invariant hypersurfaces, then it preserves a non-constant meromorphic function.
\end{thm}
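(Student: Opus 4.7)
The result is stated as a special case of \cite[Theorem B]{MR2727612} and no direct proof is attempted in the paper; nevertheless, the natural strategy runs as follows.

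I would first reduce the problem to linear algebra on $H^{2}(X,\Z)$. Since $f$ is a pseudo-automorphism, it induces a well-defined action $f^{*}$ on $H^{2}(X,\Z)$, and each invariant hypersurface $D_{i}$ (for $i=1,\dots,N$ with $N\geq\dim(X)+b_{2}(X)-1$) yields a class $[D_{i}]\in H^{2}(X,\Z)$ with $f^{*}[D_{i}]=[D_{i}]$. Because $\dim_{\Q}H^{2}(X,\Q)\leq b_{2}(X)$, among the $N$ classes $[D_{i}]$ there is a free abelian subgroup of relations $K\subset\Z^{N}$ of rank at least $N-b_{2}(X)\geq\dim(X)-1$; each $\vec{a}\in K$ gives a numerically trivial divisor $\Delta_{\vec{a}}=\sum_{i}a_{i}D_{i}$.

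The second step is to upgrade numerical equivalence to linear equivalence and extract meromorphic functions. The assignment $\vec{a}\mapsto\mathcal{O}_{X}(\Delta_{\vec{a}})$ defines a homomorphism $K\to\mathrm{Pic}^{\tau}(X)$, whose kernel $K_{0}$ consists of relations $\sum c_{i}D_{i}=\mathrm{div}(\varphi)$ for a non-constant meromorphic function $\varphi$. Invariance of the $D_{i}$ forces $\mathrm{div}(f^{*}\varphi)=\mathrm{div}(\varphi)$, whence $f^{*}\varphi=\mu\varphi$ for some $\mu\in\C^{*}$; thus every element of $K_{0}$ produces an $f$-semi-invariant meromorphic function. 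To convert semi-invariance into genuine invariance, I would combine several $\varphi_{1},\dots,\varphi_{\ell}$ with multipliers $\mu_{1},\dots,\mu_{\ell}$: a monomial $\prod\varphi_{j}^{b_{j}}$ is $f$-invariant exactly when $\prod\mu_{j}^{b_{j}}=1$, a single multiplicative condition on $(b_{j})\in\Z^{\ell}$. The precise bound $N\geq\dim(X)+b_{2}(X)-1$ is tailored so that, after accounting for the loss of at most $h^{0,1}(X)$ relations in the passage $K\to K_{0}$ and for one further relation absorbed by the multiplier character, a non-trivial invariant monomial survives and provides the required non-constant $f$-invariant meromorphic function.

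The main obstacle is precisely this double reduction: passing from numerical to linear equivalence costs the dimension of $\mathrm{Pic}^{0}(X)$, and passing from semi-invariance to genuine invariance costs a further rank-one contribution coming from the multiplier character $\mu\colon K_{0}\to\C^{*}$. The cleanest way to organize the bookkeeping, as in \cite{MR2727612}, is to work with the Albanese variety $\mathrm{Alb}(X)$, replacing the somewhat unwieldy group $\mathrm{Pic}^{0}(X)$ by the Abel--Jacobi map $D\mapsto\alpha_{*}(D)$, which directly controls the obstruction to upgrading a numerical relation to a principal divisor.
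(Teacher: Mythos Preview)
The paper gives no proof of this statement; it is quoted as a special case of \cite[Theorem~B]{MR2727612}. There is therefore no argument in the paper to compare against, and you correctly note this at the outset.

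Your sketch has the right overall architecture --- linear relations among the $[D_i]$ in $H^2$, then an upgrade to principal divisors, then semi-invariant meromorphic functions, then an invariant monomial --- and this is indeed the skeleton of the cited argument. But the bookkeeping does not close as you describe it. The passage $K\to K_0$ through $\mathrm{Pic}^\tau(X)$ is not controlled by $h^{0,1}(X)$ in the way you claim: $\mathrm{Pic}^0(X)$ is a divisible group (a complex torus), and a homomorphism from a free abelian group into a divisible group can perfectly well be injective, so a priori you may lose all of $K$ rather than a subgroup of rank $h^{0,1}$. The same objection applies to the multiplier character $K_0\to\C^*$: since $\C^*$ contains free abelian subgroups of every finite rank, there is no reason for the kernel to have corank one. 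Most tellingly, even granting both of your estimates, the bound you would obtain has the shape $N\geq b_2(X)+h^{0,1}(X)+\text{const}$, not the stated $N\geq\dim(X)+b_2(X)-1$; nothing in your outline explains why $\dim(X)$, rather than $h^{0,1}(X)$ or $b_1(X)$, should appear. In the actual proof this integer enters through a different mechanism (via closed logarithmic $1$-forms and the algebraic dimension of $X$), and your closing remark about the Albanese map, while pertinent to the $\mathrm{Pic}^0$ obstruction, does not supply that missing ingredient.
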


\begin{proof}[Proof of the Main Theorem, point (2)]
Let $f\colon X\rato X$ be a loxodromic bimeromorphic transformation of an irreducible symplectic manifold $X$ (which is a pseudo-automorphism by \ref{pseudo-aut}).\\
Suppose that $f$ admits more than $\dim(X)+b_2(X)-2$ periodic hypersurfaces; then some iterate of $f$ satisfies the hypothesis of Theorem \ref{invariant hypersurfaces fibration}. Therefore $f^n$ preserves a non-constant meromorphic function $\pi\colon X\to \Pj^1$, and, up to considering the Stein factorization of $\pi$, we can assume that $\pi$ is an $f^n$-invariant fibration onto a curve. This contradicts point $(1)$ of the Main Theorem.
\end{proof}

The following example shows that we cannot hope to obtain an analogue of point $(2)$ of the Main Theorem for higher codimensional subvarieties.

\begin{ex}
Let $f\colon S\to S$ be a loxodromic automorphism of a $K3$ surface $S$, and let $X=Hilb^n(S)$. Then $X$ is an irreducible symplectic manifold and $f$ induces a loxodromic automorphism $f_n$ of $X$. By point $(2)$ of the Main Theorem, $f_n$ admits only a finite number of invariant hypersurfaces. However $f$ admits infinitely many periodic points (\cite{MR1864630},\cite{MR3289919}); if $x$ is a periodic point in $S$, then (the image in $X$ of) $\{x\}^p\times S^{n-p}$ is a periodic subvariety of codimension $2p$.
\end{ex}

Thus we have showed the following Proposition.

\begin{prop}
For all integers $0<p\leq n$, there exist a $2n$-dimensional projective irreducible symplectic manifold $X$ and a loxodromic automorphism $f\colon X\to X$ admitting infinitely many periodic subvarieties of codimension $2p$.
\end{prop}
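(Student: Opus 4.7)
The plan is to formalize the construction sketched in the preceding Example. Fix a projective $K3$ surface $S$ together with a loxodromic automorphism $f\colon S\to S$; such pairs are classical (e.g.\ the Wehler examples used in \cite{MR1864630}). Let $X:=S^{[n]}$ be the Hilbert scheme of length-$n$ subschemes; this is a $2n$-dimensional projective irreducible symplectic manifold, and $f$ induces functorially an automorphism $f_n\colon X\to X$.

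My first step is to verify that $f_n$ is loxodromic. By Beauville's description, the second cohomology admits a decomposition
\[H^2(X,\mathbb{Z})\;\cong\;H^2(S,\mathbb{Z})\,\oplus\,\mathbb{Z}\delta,\]
orthogonal with respect to the Beauville-Bogomolov form, where $2\delta$ is the class of the exceptional divisor of the Hilbert-Chow morphism $S^{[n]}\to S^{(n)}$. The linear automorphism $f_n^*$ preserves this splitting, acting as $f^*$ on the first summand and trivially on $\mathbb{Z}\delta$. Since $f$ is loxodromic, $f^*$ has an eigenvalue of modulus $>1$ on $H^{1,1}(S,\mathbb{R})$, and the same number appears in the spectrum of $f_n^*$ on $H^{1,1}(X,\mathbb{R})$; hence $f_n$ is loxodromic.

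Next I would exhibit the periodic subvarieties. By \cite{MR1864630}, the loxodromic automorphism $f$ has infinitely many periodic points on $S$; choose a countably infinite family $(x_k)_{k\in\mathbb{N}}$ of pairwise distinct periodic points. For every unordered $p$-subset $T=\{x_{k_1},\ldots,x_{k_p}\}$, define $Z_T\subset X$ to be the closure, in $S^{[n]}$, of the locally closed locus of reduced length-$n$ subschemes $\xi\subset S$ whose support contains $T$. Since a generic such $\xi$ consists of $n$ distinct points, the locus already sits in the open subset of $S^{[n]}$ where the Hilbert-Chow morphism is an isomorphism; consequently $Z_T$ is birational to $S^{[n-p]}$ and has codimension $2p$ in $X$. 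Taking $N$ to be a common multiple of the periods of $x_{k_1},\ldots,x_{k_p}$, one has $f^N(T)=T$ as a subset of $S$, whence $f_n^N(Z_T)=Z_T$, so $Z_T$ is $f_n$-periodic. Distinct subsets $T$ give distinct subvarieties $Z_T$ (the set $T$ is recoverable from $Z_T$ as the set of points of $S$ appearing in the support of every subscheme parametrized by $Z_T$), and a countably infinite set has infinitely many $p$-element subsets, so this yields infinitely many periodic codimension-$2p$ subvarieties.

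The only delicate point is the loxodromicity of $f_n$, which is handled by the Beauville decomposition together with the compatibility of the Beauville-Bogomolov form with the intersection form on $S$; the remainder is routine geometry of Hilbert schemes of points on $K3$ surfaces.
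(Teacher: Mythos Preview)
Your proposal is correct and follows essentially the same construction as the paper: take $X=S^{[n]}$ for a projective $K3$ surface $S$ with a loxodromic automorphism, check that the induced $f_n$ is loxodromic via the Beauville splitting of $H^2$, and use the infinitely many $f$-periodic points on $S$ (cf.\ \cite{MR1864630}) to manufacture periodic subvarieties of $f_n$. The only difference is cosmetic: the paper fixes a single periodic point $x$ and looks at the image in $X$ of $\{x\}^p\times S^{n-p}$, whereas you take $p$ distinct periodic points and the locus of subschemes containing all of them---your variant makes the codimension-$2p$ computation more transparent, since everything happens in the open locus where the Hilbert--Chow morphism is an isomorphism.
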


\section{Appendix: An alternative approach to the Main Theorem}
\label{alternative approach}

In this section we describe a different approach to the proof of the Main Theorem which doesn't require the Proposition in Section \ref{intro}. The result we obtain is actually slightly weaker than the Main Theorem; however this approach allows to prove point $(2)$ and $(3)$, as well as point $(1)$ for automorphisms.

We have already seen in Proposition \ref{characterization birational} that the first dynamical degree of a bimeromorphic transformation $f\colon X\rato X$ is either $1$ or an algebraic integer $\lambda$ whose conjugates over $\Q$ are $\lambda^{-1}$ and some complex numbers of modulus $1$ (so that $\lambda$ is a quadratic or a Salem number). In the case of automorphisms, the following Proposition from Verbitsky \cite{MR1406664} allows to completely describe all the other dynamical degrees as well.

\begin{prop}
\label{verbitsky}
Let $X$ be an irreducible symplectic manifold of dimension $2n$ and let $SH2(X,\C)\subset H^*(X,\C)$ be the subalgebra generated by $H^2(X,\C)$. Then we have an isomorphism
$$SH^2(X,\C)=\Sym ^*H^2(X,\C)/\langle \alpha^{n+1}|q_{BB}(\alpha)=0 \rangle$$.
\end{prop}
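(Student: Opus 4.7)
The plan is to construct a surjective cup-product ring homomorphism $\phi \colon \Sym^* H^2(X,\C) \to SH^2(X,\C)$ — which exists by the very definition of $SH^2(X,\C)$ as the subalgebra generated by $H^2$ — and to identify $\ker\phi$ with the ideal $I := \langle \alpha^{n+1} \mid q_{BB}(\alpha)=0\rangle$.

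For the inclusion $I\subseteq \ker\phi$, I would first polarize the Beauville-Fujiki relation $\int_X \alpha^{2n} = c\, q_{BB}(\alpha)^n$. Substituting $\alpha = \sum_i t_i\alpha_i$ and comparing multilinear coefficients yields the Fujiki formula
$$\int_X \alpha_1\cup\cdots\cup\alpha_{2n} = c'\sum_{\sigma}\prod_{k=1}^n q_{BB}(\alpha_{\sigma(2k-1)},\alpha_{\sigma(2k)}),$$
where $\sigma$ runs over all pairings of $\{1,\dots,2n\}$. Now fix $\alpha\in H^2$ with $q_{BB}(\alpha)=0$ and arbitrary $\beta_1,\dots,\beta_{n-1}\in H^2$. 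In every pairing appearing in the expansion of $\int_X \alpha^{n+1}\cup\beta_1\cdots\beta_{n-1}$, the pigeonhole principle forces at least two of the $n+1$ copies of $\alpha$ to be paired together, contributing the factor $q_{BB}(\alpha,\alpha)=q_{BB}(\alpha)=0$. Hence $\int_X \alpha^{n+1}\cup\beta_1\cdots\beta_{n-1}=0$ for every choice of $\beta_i$, which says that $\alpha^{n+1}\in SH^{2n+2}$ is Poincaré-orthogonal to all of $SH^{2n-2}$.

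To upgrade this orthogonality to the actual vanishing $\alpha^{n+1}=0$ in $SH^*$, I would invoke the Looijenga-Lunts-Verbitsky $\mathfrak{so}(H^2\oplus U)$-action on $H^*(X,\C)$ (with $U$ a hyperbolic plane arising from Lefschetz operators): the subalgebra $SH^*$ is an irreducible submodule, and the restriction of the Poincaré pairing is an invariant bilinear form which is nonzero (since $\int_X\omega^{2n}\neq 0$ for a K\"ahler class $\omega$ and $\omega^{2n}\in SH^{4n}$), hence non-degenerate by Schur's lemma. This forces $\alpha^{n+1}=0$ in $SH^{2n+2}$, and since $\phi$ is a ring homomorphism, $I\subseteq\ker\phi$ follows.

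For the reverse inclusion $\ker\phi\subseteq I$, I would set $A := \Sym^* H^2 / I$ and establish that the induced surjection $\bar\phi : A\to SH^*$ is an isomorphism. The Fujiki formula descends a symmetric bilinear pairing $A^k\times A^{2n-k}\to\C$ (with top-degree functional $\alpha\mapsto q_{BB}(\alpha)^n$ well defined on $A^{2n}$), and $\bar\phi$ intertwines it with the restriction of the Poincaré pairing. If the pairing on $A$ is non-degenerate, then $\ker\bar\phi$ is a subspace that must pair trivially with its Poincaré dual, so $\ker\bar\phi=0$. The main obstacle is precisely this non-degeneracy on $A$: it is the combinatorial/representation-theoretic heart of Verbitsky's theorem, and is typically handled either by identifying $A$ with an explicit irreducible $\mathfrak{so}$-representation whose invariant bilinear form is visibly non-zero, or by an inductive analysis of the generators $\alpha^{n+1}$ of $I$ combined with the Fujiki relation.
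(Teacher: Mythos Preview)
The paper does not prove this proposition; it is stated with attribution to Verbitsky \cite{MR1406664} and then used as a black box to derive Corollary~\ref{remark oguiso}. There is therefore no proof in the paper against which to compare your attempt.

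On its own merits, your outline follows the standard architecture of Verbitsky's argument, but it is incomplete at the decisive point. The step $I\subseteq\ker\phi$ via the polarized Fujiki relation is correct as far as showing that $\alpha^{n+1}$ is Poincar\'e-orthogonal to $SH^{2n-2}$; however, the upgrade to $\alpha^{n+1}=0$ in $H^{2n+2}(X)$ by invoking irreducibility of $SH^*$ as an LLV $\mathfrak{so}$-module already draws on Verbitsky's circle of results, so you would need to establish that irreducibility independently (for instance by exhibiting $1\in H^0$ as a lowest-weight vector whose cyclic $\mathfrak{so}$-submodule coincides with $SH^*$) to avoid circularity. More seriously, for the reverse inclusion $\ker\phi\subseteq I$ you explicitly defer the key step: the non-degeneracy of the induced pairing on $A=\Sym^*H^2/I$ is the substantive content of the theorem, and neither of the two strategies you mention is actually carried out. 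As written, the proposal is an honest roadmap of how the proof is organised rather than a proof, with the hard representation-theoretic step left open.
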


The following Corollary is due to Oguiso \cite{MR2533769}.

\begin{cor}
\label{remark oguiso}
Let $f\colon X\to X$ be an automorphism of an irreducible symplectic manifold of dimension $2n$. Then for $p=0,1,\ldots, n$
$$\lambda_p(f)=\lambda_{2n-p}(f)=\lambda_1(f)^p.$$
\end{cor}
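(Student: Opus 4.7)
Set $\lambda=\lambda_1(f)$. My plan is to prove $\lambda_p(f)=\lambda^p$ for $0\le p\le n$ via matching inequalities, and then to deduce $\lambda_{2n-p}(f)=\lambda^p$ by applying the result to $f^{-1}$ and invoking Remark \ref{inverse degrees}. Throughout I may assume $\lambda>1$, since the case $\lambda=1$ falls out of the log-concavity step below.

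For the lower bound $\lambda_p(f)\ge \lambda^p$, I would use that $f$ is loxodromic: by Theorem \ref{characterization birational}, $f_1^*$ admits a real isotropic eigenvector $\alpha\in H^{1,1}(X,\R)$ with $f^*\alpha=\lambda\alpha$ and $q_{BB}(\alpha)=0$. The wedge power $\alpha^p$ lies in $H^{p,p}(X,\R)$ and satisfies $f^*(\alpha^p)=\lambda^p\alpha^p$. The crucial point is its nonvanishing: by Proposition \ref{verbitsky}, the only relations in the subalgebra of $H^*(X,\C)$ generated by $H^2$ that involve powers of an isotropic class lie in the ideal generated by $\{\alpha^{n+1}\mid q_{BB}(\alpha)=0\}$, which is concentrated in degrees $\ge n+1$. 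Hence $\alpha^p\ne 0$ for every $p\le n$, so $\lambda^p$ is an eigenvalue of $f_p^*$ and $\lambda_p(f)\ge \lambda^p$.

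For the upper bound I would invoke the log-concavity of $p\mapsto \log\lambda_p(f)$ (Proposition \ref{log-concavity}). Since $\lambda_0(f)=1$, the successive slopes $\log\lambda_p(f)-\log\lambda_{p-1}(f)$ form a non-increasing sequence whose first value is $\log\lambda$; summing yields $\log\lambda_p(f)\le p\log\lambda$ for every $p\ge 1$, i.e. $\lambda_p(f)\le\lambda^p$. Combined with the lower bound this gives $\lambda_p(f)=\lambda^p$ for $0\le p\le n$. The second identity then follows from $(f^{-1})^*=(f^*)^{-1}$: the spectrum of $f^{-1}$ on $H^{1,1}$ consists of the reciprocals of the spectrum of $f$, so by Theorem \ref{characterization birational} it again has maximum modulus $\lambda$, giving $\lambda_1(f^{-1})=\lambda$. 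Applying the already established formula to $f^{-1}$ yields $\lambda_p(f^{-1})=\lambda^p$ for $p\le n$, and Remark \ref{inverse degrees} gives $\lambda_{2n-p}(f)=\lambda_p(f^{-1})=\lambda^p$. The only nontrivial step is the lower bound via Verbitsky; its content amounts precisely to the statement that the powers of an isotropic $H^{1,1}$-class remain linearly independent inside $SH^{2p}$ up to $p=n$, which is exactly what makes the clean formula $\lambda_p=\lambda_1^p$ possible.
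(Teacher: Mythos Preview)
Your argument is correct and follows essentially the same route as the paper: nonvanishing of $\alpha^p$ for $p\le n$ via Verbitsky's description of $SH^2(X,\C)$ gives the lower bound, log-concavity gives the upper bound, and Remark~\ref{inverse degrees} together with the first half applied to $f^{-1}$ yields the symmetric identity. The only minor variation is that you deduce $\lambda_1(f^{-1})=\lambda_1(f)$ directly from the spectral description of a loxodromic isometry in Theorem~\ref{characterization birational}, whereas the paper obtains it by matching $\lambda_n(f)=\lambda_n(f^{-1})$; both are equally valid.
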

\begin{proof}
By Proposition \ref{verbitsky} the cup-product induces an injection
$$\Sym^p H^2(X, \C)\hookrightarrow H^{2p}(X,\C)$$
for $p=1,\ldots, n$.\\
Let $v_1\in H^2(X,\C)$ be an eigenvector for the eigenvalue $\lambda=\lambda_1(f)$. Then $v_p:=v_1^p\in H^{2p}(X,\C)$ is a non-zero class for $p=1,\ldots, n$ and $f^*v_p=(f^*v_1)^p=\lambda^p v_p$. This implies that $\lambda_p(f)\geq \lambda_1(f)^p$, and we must have equality by log-concavity (Proposition \ref{log-concavity}). This proves the result for $p=0,1,\ldots, n$.\\
Now by Remark \ref{inverse degrees} we have $\lambda_{2n-p}(f)=\lambda_p(f^{-1})$. Applying what we have just proved to $f^{-1}$ we obtain
$$\lambda_{2n-1}(f)=\lambda_1(f^{-1})=\lambda_n(f^{-1})^{1/n}=\lambda_n(f^{-1})^{1/n}=\lambda_1(f)$$
and thus, for $p=0, \ldots ,n$,
$$\lambda_{2n-p}(f)=\lambda_{p}(f^{-1})=\lambda_1(f^{-1})^p=\lambda_1(f)^p,$$
which concludes the proof.
\end{proof}

\begin{lemma}

\label{trivial relative dyn}
Let $X$ be a smooth projective variety, $f\colon X \rato X$ a birational transformation of $X$, $\pi\colon X\rato B$ a rational $f$-invariant fibration onto a smooth projective variety $B$. If the generic fibre of $\pi$ is of general type, then all the relative dynamical degrees $\lambda_p(f|\pi)$ are equal to $1$ (for $p=0, \ldots , \dim(X)-\dim(B)$).

\end{lemma}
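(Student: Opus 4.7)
The plan is to exploit bimeromorphic invariance of relative dynamical degrees (Remark~\ref{bimeromorphic invariant}) to reduce to the relative canonical model, and then to exhibit a fibrewise polarization whose pull-back by each iterate $f^n$ differs from itself only by a class coming from the base. This makes the defining integral of $\lambda_p(f|\pi)$ constant in $n$.

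\textbf{Reduction to the relative canonical model.} After resolving indeterminacies I may assume $\pi$ is a morphism. Because the generic fibre is of general type, the relative canonical fibration of Section~\ref{relative fibration} produces $\phi\colon X\rato Z\subset Y:=\Proj(\pi_*(mK_{X/B})^\vee)$ with $\phi$ birational onto its image $Z$; by Remark~\ref{induced linear relative} the induced $f_Z\colon Z\rato Z$ is the restriction of a birational transformation $\tilde f\colon Y\rato Y$ that acts \emph{linearly on every generic fibre} of $\eta\colon Y\to B$. Replacing $(X,f,\pi)$ by $(Z,f_Z,\eta|_Z)$, which is harmless by Remark~\ref{bimeromorphic invariant}, I may assume $X\subset Y$ fibrewise, and that $f$ is the restriction of a birational self-map of $Y$ that is linear on the fibres of $\eta$.

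\textbf{Fibrewise polarization and computation.} Set $H:=\mathcal O_Y(1)|_X$; this is a relatively very ample line bundle for $\pi$. For every $n\ge 0$ the restriction $(\tilde f^n)|_{Y_b}\colon Y_b\to Y_{g^n(b)}$, being a composition of linear projective isomorphisms, is itself linear, hence sends the hyperplane class to the hyperplane class. Therefore $(\tilde f^n)^*\mathcal O_Y(1)\otimes \mathcal O_Y(-1)$ is trivial on every generic fibre of $\eta$, so it is isomorphic to $\eta^*L_n$ for some $L_n\in Pic(B)$. Restricting to $X$ gives $(f^n)^*H\cong H\otimes \pi^*L_n$, whence in de Rham cohomology
$$(f^n)^*c_1(H)=c_1(H)+\pi^*\gamma_n,\qquad \gamma_n\in H^{1,1}(B,\R).$$
Now pick a smooth $(1,1)$-form $\omega_H$ representing $c_1(H)$ and $A>0$ large enough that $\omega:=\omega_H+A\pi^*\omega_B$ is Kähler on $X$; the preceding paragraph then yields $(f^n)^*\omega\equiv\omega+\pi^*\delta_n$ in cohomology, for some $\delta_n\in H^{1,1}(B,\R)$. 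Expanding binomially,
$$(f^n)^*\omega^p \equiv \sum_{i=0}^p \binom{p}{i}\,\omega^{p-i}\wedge \pi^*\delta_n^{\,i},$$
and wedging with $\pi^*\omega_B^k$ annihilates every term with $i\ge 1$, because $\pi^*(\delta_n^{\,i}\wedge \omega_B^k)$ is the pull-back of a $(k+i,k+i)$-form on the $k$-dimensional base $B$. Consequently
$$\int_X (f^n)^*\omega^p\wedge \pi^*\omega_B^k\wedge \omega^{d-p-k}=\int_X \omega^{d-k}\wedge \pi^*\omega_B^k,$$
a positive constant independent of $n$; taking the $n$-th root gives $\lambda_p(f|\pi)=1$.

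\textbf{Main obstacle.} The delicate step is validating the identity $(f^n)^*c_1(H)=c_1(H)+\pi^*\gamma_n$ as a cohomological identity in which $(f^n)^*$ denotes the pull-back by the single meromorphic map $f^n$, defined via a resolution of its graph (Section~\ref{dyn deg}), rather than by iterating $f^*$. Applying the fibrewise-linearity argument to $f^n$ directly circumvents the failure of $(f^n)^*=(f^*)^n$ in the bimeromorphic setting, but one still has to check that the line-bundle identity $(\tilde f^n)^*\mathcal O_Y(1)\cong \mathcal O_Y(1)\otimes \eta^*L_n$, valid away from the indeterminacy of $\tilde f^n$, gives the correct de Rham class after passing to a resolution. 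A minor additional point is that the relative canonical model $Z$ should be chosen smooth, which can be arranged by a further bimeromorphic modification without affecting the relative dynamical degrees.
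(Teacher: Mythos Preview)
Your reduction to the relative canonical model $Z\subset Y$ and use of the fibrewise projective-linear action of $\tilde f$ is exactly the paper's strategy. The gap is in the step ``Expanding binomially'': you are tacitly using $(f^n)^*(\omega^p)=\big((f^n)^*\omega\big)^p$, but for a merely birational $f^n$ the pull-back on cohomology is $(p_W)_* q_W^*$ through a resolution $p_W,q_W\colon W\to X$ of the graph, and while $q_W^*$ is a ring homomorphism, $(p_W)_*$ is not. So the class you expand is \emph{not} the class $(f^n)^*(\omega^p)$ that enters the defining integral of $\lambda_p(f|\pi)$. The ``main obstacle'' you single out---whether the $(1,1)$-identity $(f^n)^*c_1(H)=c_1(H)+\pi^*\gamma_n$ survives resolution---is in fact the minor point: any defect there is supported on vertical divisors and dies against $\pi^*\omega_B^k$. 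The real problem is multiplicativity of $(f^n)^*$ in degree $p>1$.

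The paper sidesteps this by never manipulating $(f^n)^*H_Y^p$ globally. It first bounds $\lambda_p(f_Z|\eta_Z)\le\lambda_p(\tilde f|\eta)$ by a positivity comparison (the constant $M$), and then computes $\lambda_p(\tilde f|\eta)$ on $Y$ by replacing $\eta^*H_B^{\dim B}$ with the class $k[F]$ of a generic fibre: this reduces the intersection to one on $F\cong\Pj^N$, where $\tilde f^n|_F$ is an honest linear isomorphism and pull-back \emph{is} a ring homomorphism, so $\big((\tilde f^n)^*H_Y\big)|_F^p$ is legitimate. Your argument can be repaired in the same spirit---use that $\pi^*\omega_B^k$ is Poincar\'e dual to a generic fibre of $\pi$, choose that fibre over a point where $f^n$ restricts to a biregular isomorphism (this holds over a dense open of $B$ once $Z$ is smooth there), and carry out the binomial expansion on the fibre---but as written the expansion step is unjustified.
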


\begin{proof}
Since the Kodaira dimension and the relative dynamical degrees are bimeromorphic invariants (Remark \ref{bimeromorphic invariant}), up to considering a resolution of the indeterminacy locus of $\pi$, we can suppose that $\pi$ is regular.\\
Let
\centerline{\xymatrix{
\phi\colon X \ar[d]^\pi \ar[r] & Y:=\Proj(\pi_*K_{X/Y}^{\otimes m}) \ar[ld]^\eta\\
B }}
be the Iitaka fibration. Since $\phi$ is birational onto its image, denoting $Z\subset Y$ the closure of $\phi(X)$, the claim is equivalent to $\lambda_p(f_Z|\eta_Z)=1$, where $\eta_Z$ denotes the restriction of $\eta$ to $Z$ and $f_Z=\phi\circ f\circ \phi^{-1} \colon Z\rato Z$.\\
The construction of Remark \ref{induced linear relative} provides a birational transformation $\tilde f\colon Y\rato Y$ extending $f_Z$.

\centerline{\xymatrix{
X \ar[d]^\pi \ar@{-->}[r]^\phi  \ar@(ul,ur)[]|{f} & Z \ar@(ul,ur)[]|{f_Z} \ar@{^{(}->}[r] \ar[ld]& Y \ar@(ul,ur)[]|{\tilde f} \ar[lld]^{\eta}\\
B \ar@(dl,ul)[]|{g}
}}
\vspace{0.3cm}

Now we will prove that if $\lambda_p(\tilde f|\eta)=1$ then $\lambda_p(f_Z|\eta_Z)=1$. Let $H_Y\in Pic(Y)$ and $H_B\in Pic(B)$ be ample classes; therefore $H_Y|_Z$ is an ample class on $Z$. The map
\begin{align*}
H^{2n,2n}(Y,\R)&\to \R\\
\alpha &\mapsto \int_Y \alpha \wedge c_1(H_Y)^{\dim(Y)-\dim(X)}=\alpha \cdot H_Y^{\dim(Y)-\dim(X)} 
\end{align*}
is linear and strictly positive (except on $0$) on the closed positive cone $\mathcal K_{2n}\subset H^{2n,2n}(Y,\R)$. Since $\alpha\mapsto \alpha\cdot [Z]$ is linear too, we can define
$$M=\max_{\alpha\in  \mathcal K_{2n}\setminus \{0\}}\frac {\alpha\cdot [Z]}{\alpha \cdot H_Y^{\dim(Y)-\dim(X)} }\geq 0.$$
Now
$$\lambda_p(f_Z|\eta_Z)=\lim_{n\to + \infty} \left(  (\tilde f^n)^*H_Y^p\cdot \eta^* H_B^{\dim(B)}\cdot H_Y^{2n-p-\dim(B)}\cdot [Z] \right)^{\frac 1n}\leq$$
$$\lim_{n\to + \infty} \left(  M(\tilde f^n)^*H_Y^p\cdot \eta^* H_B^{\dim(B)}\cdot H_Y^{\dim(Y)-p-\dim(B)} \right)^{\frac 1n}=$$
$$\lim_{n\to + \infty} \left(  (\tilde f^n)^*H_Y^p\cdot \eta^* H_B^{\dim(B)}\cdot H_Y^{\dim(Y)-p-\dim(B)} \right)^{\frac 1n}=\lambda_p(\tilde f|\eta)=1,$$
and since all relative dynamical degrees are $\geq 1$ (Proposition \ref{log-concavity}) we have $\lambda_p(f_Z|\eta_Z)=1$.

Now all is left to prove is that $\lambda_p(\tilde f|\eta)=1$. There exists $k>0$ such that $\eta^*H_B^{\dim(B)}\equiv_{num}k[F]$, where $[F]$ is the numerical class of a fibre $F$ of $\eta$. We have
$$\lambda_p(\tilde f|\eta)=\lim_{n\to + \infty} \left(  (\tilde f^n)^*H_Y^p\cdot \eta^* H_B^{\dim(B)}\cdot H_Y^{\dim(Y)-p-\dim(B)} \right)^{\frac 1n}=$$
$$\lim_{n\to + \infty} \left(  (\tilde f^n)^*H_Y^p\cdot k[F]\cdot H_Y^{\dim(Y)-p-\dim(B)} \right)^{\frac 1n}=$$
$$\lim_{n\to + \infty} \left(  \left((\tilde f^n)^*H_Y\right)|_F^p\cdot H_Y|_F^{\dim(Y)-p-\dim(B)} \right)^{\frac 1n}.$$
For each fibre we have a canonical identification $F\cong \Pj^N$, and by this identification $H_Y|_F\cong \mathcal O_{\Pj^N}(d)$, meaning that the hyperplane section $H_Y|_F$ is defined by an equation of degree $d$. Under the identification, the action of $\tilde f$ from one fibre to another is linear, so that $\left((\tilde f^n)^*H_Y\right)|_F$ is also defined by an equation of degree $d$ on $\Pj^N$. This means that
$$\lambda_p(\tilde f|\eta)=\lim_{n\to +\infty}(d^{\dim(F)})^{\frac 1n}=1$$
as we wanted to show. This concludes the proof.

\end{proof}

The following Proposition is a weaker version of point $(1)$ of the Main Theorem.

\begin{prop}
\label{inequality dim B}
Let $f\colon X\rato X$ be a loxodromic transformation of an irreducible symplectic manifold $X$ of dimension $2n$, and let
$$1=\lambda_0(f)<\cdots < \lambda_{p_0}(f)=\cdots =\lambda_{p_0+k}(f)>\cdots>\lambda_{2n}(f)=1$$
be its dynamical degrees.\\
If $\pi\colon B\rato B$ is an $f$-invariant meromorphic fibration, then $\dim(B)\geq 2n-k$. In particular, if $f$ is an automorphism (or, more generally, if all the consecutive dynamical degrees of $f$ are distinct), then it is primitive.
\end{prop}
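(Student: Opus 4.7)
The plan is to combine the Key Lemma (Lemma \ref{key lemma}) with the triviality of relative dynamical degrees for fibrations whose generic fibre is of general type (Lemma \ref{trivial relative dyn}) and the product formula of Dinh--Nguy\^en--Truong (Theorem \ref{dinh nguyen}). Since $f$ is loxodromic and $\pi$ is an $f$-invariant fibration, the Key Lemma guarantees that $X$ is projective and that the generic fibre of $\pi$ is of general type; therefore Lemma \ref{trivial relative dyn} applies and yields $\lambda_q(f|\pi)=1$ for every $q=0,\ldots ,2n-b$ (with $b:=\dim(B)$), while $\lambda_q(f|\pi)=0$ for $q>2n-b$.

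Feeding this into the product formula and setting $\Lambda_r:=\lambda_r(g)$, I obtain
$$\lambda_p(f)=\max_{q+r=p}\lambda_q(f|\pi)\lambda_r(g)=\max_{\max(0,\,p-2n+b)\leq r\leq \min(p,\,b)}\Lambda_r.$$
By log-concavity (Proposition \ref{log-concavity}), the sequence $r\mapsto \Lambda_r$ is unimodal on $\{0,\ldots ,b\}$: strictly increasing, then constant on a plateau $[r_1,r_2]$ with common value $\Lambda$, then strictly decreasing. It follows from the displayed formula that $\lambda_p(f)=\Lambda$ if and only if the window $[\max(0,p-2n+b),\min(p,b)]$ intersects $[r_1,r_2]$, which occurs precisely when $r_1\leq p\leq r_2+2n-b$. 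Consequently the plateau of the sequence $p\mapsto \lambda_p(f)$ has length at least $(r_2-r_1)+2n-b+1\geq 2n-b+1$.

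Since by hypothesis this plateau has length $k+1$, comparing the two lengths gives $k+1\geq 2n-b+1$, that is $b\geq 2n-k$, which is the desired inequality. For the last assertion, Corollary \ref{remark oguiso} shows that whenever $f$ is an automorphism one has $\lambda_0(f)<\lambda_1(f)<\cdots <\lambda_n(f)>\cdots >\lambda_{2n}(f)$, forcing $k=0$; more generally $k=0$ whenever consecutive dynamical degrees are distinct. The bound then requires $b\geq 2n$, contradicting $0<b<\dim(X)=2n$, so no $f$-invariant fibration can exist.

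The main delicate point I expect is the combinatorial bookkeeping of the plateau: one has to argue carefully that the plateau of $\lambda_p(f)$ is exactly the plateau of $\Lambda_r$ lengthened by the $(2n-b)$ degrees of freedom provided by the trivial relative dynamical degrees. Once this is clearly set up, the proof reduces to invoking the Key Lemma, Lemma \ref{trivial relative dyn}, Theorem \ref{dinh nguyen} and log-concavity, plus Corollary \ref{remark oguiso} for the automorphism corollary.
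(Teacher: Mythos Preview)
Your proof is correct and follows essentially the same approach as the paper: invoke the Key Lemma, then Lemma \ref{trivial relative dyn}, then the product formula (Theorem \ref{dinh nguyen}) together with log-concavity to bound the length of the plateau. The paper's version is slightly more direct---it simply picks one index $q$ where $\lambda_q(g)$ is maximal and observes that $\lambda_q(f)=\lambda_{q+1}(f)=\cdots=\lambda_{q+\dim F}(f)=\lambda_q(g)$, which already forces $k\geq \dim F$---so your full determination of the plateau as $[r_1,r_2+2n-b]$ is not needed, but it is correct and yields the same bound.
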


\begin{proof}
Let $g\colon B\rato B$ be a birational transformation such that $g\circ \pi=\pi\circ f$.\\
\centerline{\xymatrix{
X \ar@{-->}[d]^\pi \ar@{-->}[r]^f & X \ar@{-->}[d]^\pi\\
B \ar@{-->}[r]^g & B
}}
We know by Lemma \ref{key lemma} that the generic fibre of $\pi$ is of general type; by Lemma \ref{trivial relative dyn} this implies that all the relative dynamical degrees $\lambda_p(f|\pi)$ are equal to $1$. By Theorem \ref{dinh nguyen} we then have
$$\lambda_p(f)=\max_{p-\dim(F)\leq q\leq p}\lambda_q(g),$$
where $\dim(F)=\dim(X)-\dim(B)$ is the dimension of a generic fibre.\\
Let $q\in \{0, 1, \ldots, \dim(B)\}$ be such that $\lambda_q(g)$ is maximal. Then 
$$\lambda_q(f)=\lambda_{q+1}(f)=\cdots= \lambda_{q+\dim(F)}(f)=\lambda_q(g),$$ 
meaning that $k\geq \dim(F)=2n-\dim(B)$. This concludes the proof.

\end{proof}

\begin{rem}
Since in the Theorem we have $k\leq 2n-1$, the base of an invariant fibration cannot be a curve. Therefore Proposition \ref{inequality dim B} implies point $(2)$ of the Main Theorem.
\end{rem}

\bibliography{references.bbl}{}

\begin{thebibliography}{10}

\bibitem{MR2400885}
Ekaterina Amerik and Fr{\'e}d{\'e}ric Campana.
\newblock Fibrations m\'eromorphes sur certaines vari\'et\'es \`a fibr\'e
  canonique trivial.
\newblock {\em Pure Appl. Math. Q.}, 4(2, part 1):509--545, 2008.

\bibitem{MR0214605}
Garrett Birkhoff.
\newblock Linear transformations with invariant cones.
\newblock {\em Amer. Math. Monthly}, 74:274--276, 1967.

\bibitem{MR2050205}
S{\'e}bastien Boucksom.
\newblock Divisorial {Z}ariski decompositions on compact complex manifolds.
\newblock {\em Ann. Sci. \'Ecole Norm. Sup. (4)}, 37(1):45--76, 2004.

\bibitem{MR1864630}
Serge Cantat.
\newblock Dynamique des automorphismes des surfaces {$K3$}.
\newblock {\em Acta Math.}, 187(1):1--57, 2001.

\bibitem{MR2727612}
Serge Cantat.
\newblock Invariant hypersurfaces in holomorphic dynamics.
\newblock {\em Math. Res. Lett.}, 17(5):833--841, 2010.

\bibitem{MR3289919}
Serge Cantat.
\newblock Dynamics of automorphisms of compact complex surfaces.
\newblock In {\em Frontiers in complex dynamics}, volume~51 of {\em Princeton
  Math. Ser.}, pages 463--514. Princeton Univ. Press, Princeton, NJ, 2014.

\bibitem{MR2920277}
Serge Cantat, Antoine Chambert-Loir, and Vincent Guedj.
\newblock {\em Quelques aspects des syst\`emes dynamiques polynomiaux},
  volume~30 of {\em Panoramas et Synth\`eses [Panoramas and Syntheses]}.
\newblock Soci\'et\'e Math\'ematique de France, Paris, 2010.

\bibitem{demailly1997complex}
Jean-Pierre Demailly.
\newblock {\em Complex analytic and differential geometry}.
\newblock Citeseer, 1997.

\bibitem{MR1867314}
J.~Diller and C.~Favre.
\newblock Dynamics of bimeromorphic maps of surfaces.
\newblock {\em Amer. J. Math.}, 123(6):1135--1169, 2001.

\bibitem{MR2851870}
Tien-Cuong Dinh and Vi{\^e}t-Anh Nguy{\^e}n.
\newblock Comparison of dynamical degrees for semi-conjugate meromorphic maps.
\newblock {\em Comment. Math. Helv.}, 86(4):817--840, 2011.

\bibitem{MR2989646}
Tien-Cuong Dinh, Vi{\^e}t-Anh Nguy{\^e}n, and Tuyen~Trung Truong.
\newblock On the dynamical degrees of meromorphic maps preserving a fibration.
\newblock {\em Commun. Contemp. Math.}, 14(6):1250042, 18, 2012.

\bibitem{MR2119243}
Tien-Cuong Dinh and Nessim Sibony.
\newblock Regularization of currents and entropy.
\newblock {\em Ann. Sci. \'Ecole Norm. Sup. (4)}, 37(6):959--971, 2004.

\bibitem{MR2391122}
Tien-Cuong Dinh and Nessim Sibony.
\newblock Upper bound for the topological entropy of a meromorphic
  correspondence.
\newblock {\em Israel J. Math.}, 163:29--44, 2008.

\bibitem{gizatullin1980rational}
Marat~Kharisovitch Gizatullin.
\newblock Rational g-surfaces.
\newblock {\em Izvestiya Rossiiskoi Akademii Nauk. Seriya Matematicheskaya},
  44(1):110--144, 1980.

\bibitem{grivaux2013parabolic}
Julien Grivaux.
\newblock Parabolic automorphisms of projective surfaces (after mh gizatullin).
\newblock {\em arXiv preprint arXiv:1307.1771}, 2013.

\bibitem{MR1095529}
M.~Gromov.
\newblock Convex sets and {K}\"ahler manifolds.
\newblock In {\em Advances in differential geometry and topology}, pages 1--38.
  World Sci. Publ., Teaneck, NJ, 1990.

\bibitem{MR1963559}
M.~Gross, D.~Huybrechts, and D.~Joyce.
\newblock {\em Calabi-{Y}au manifolds and related geometries}.
\newblock Universitext. Springer-Verlag, Berlin, 2003.
\newblock Lectures from the Summer School held in Nordfjordeid, June 2001.

\bibitem{MR0217084}
A.~Grothendieck.
\newblock \'{E}l\'ements de g\'eom\'etrie alg\'ebrique. {II}. \'{E}tude globale
  \'el\'ementaire de quelques classes de morphismes.
\newblock {\em Inst. Hautes \'Etudes Sci. Publ. Math.}, (8):222, 1961.

\bibitem{MR3075000}
A.~Grothendieck and J.~A. Dieudonn{\'e}.
\newblock {\em El\'ements de g\'eom\'etrie alg\'ebrique. {I}}, volume 166 of
  {\em Grundlehren der Mathematischen Wissenschaften [Fundamental Principles of
  Mathematical Sciences]}.
\newblock Springer-Verlag, Berlin, 1971.

\bibitem{MR2139697}
Vincent Guedj.
\newblock Decay of volumes under iteration of meromorphic mappings.
\newblock {\em Ann. Inst. Fourier (Grenoble)}, 54(7):2369--2386 (2005), 2004.

\bibitem{MR1416564}
Joe Harris.
\newblock {\em Algebraic geometry}, volume 133 of {\em Graduate Texts in
  Mathematics}.
\newblock Springer-Verlag, New York, 1995.
\newblock A first course, Corrected reprint of the 1992 original.

\bibitem{MR3431659}
Fei Hu, JongHae Keum, and De-Qi Zhang.
\newblock Criteria for the existence of equivariant fibrations on algebraic
  surfaces and hyperk\"ahler manifolds and equality of automorphisms up to
  powers: a dynamical viewpoint.
\newblock {\em J. Lond. Math. Soc. (2)}, 92(3):724--735, 2015.

\bibitem{khovanskii1979geometry}
AG~Khovanskii.
\newblock The geometry of convex polyhedra and algebraic geometry.
\newblock {\em Uspehi Mat. Nauk}, 34(4):160--161, 1979.

\bibitem{MR2095471}
Robert Lazarsfeld.
\newblock {\em Positivity in algebraic geometry. {I}}, volume~48 of {\em
  Ergebnisse der Mathematik und ihrer Grenzgebiete. 3. Folge. A Series of
  Modern Surveys in Mathematics [Results in Mathematics and Related Areas. 3rd
  Series. A Series of Modern Surveys in Mathematics]}.
\newblock Springer-Verlag, Berlin, 2004.
\newblock Classical setting: line bundles and linear series.

\bibitem{MR2964480}
Eyal Markman.
\newblock A survey of {T}orelli and monodromy results for
  holomorphic-symplectic varieties.
\newblock In {\em Complex and differential geometry}, volume~8 of {\em Springer
  Proc. Math.}, pages 257--322. Springer, Heidelberg, 2011.

\bibitem{moishezon1967criterion}
BG~Moishezon.
\newblock A criterion for projectivity of complete algebraic abstract
  varieties.
\newblock {\em Amer. Math. Soc. Translations}, 63:1--50, 1967.

\bibitem{MR2533769}
Keiji Oguiso.
\newblock A remark on dynamical degrees of automorphisms of hyperk\"ahler
  manifolds.
\newblock {\em Manuscripta Math.}, 130(1):101--111, 2009.

\bibitem{MR2247603}
Edoardo Sernesi.
\newblock {\em Deformations of algebraic schemes}, volume 334 of {\em
  Grundlehren der Mathematischen Wissenschaften [Fundamental Principles of
  Mathematical Sciences]}.
\newblock Springer-Verlag, Berlin, 2006.

\bibitem{MR524795}
Bernard Teissier.
\newblock Du th\'eor\`eme de l'index de {H}odge aux in\'egalit\'es
  isop\'erim\'etriques.
\newblock {\em C. R. Acad. Sci. Paris S\'er. A-B}, 288(4):A287--A289, 1979.

\bibitem{tsuji2010global}
Hajime Tsuji.
\newblock Global generation of the direct images of relative pluricanonical
  systems.
\newblock {\em arXiv preprint arXiv:1012.0884}, 2010.

\bibitem{MR0506253}
Kenji Ueno.
\newblock {\em Classification theory of algebraic varieties and compact complex
  spaces}.
\newblock Lecture Notes in Mathematics, Vol. 439. Springer-Verlag, Berlin-New
  York, 1975.
\newblock Notes written in collaboration with P. Cherenack.

\bibitem{MR1406664}
M.~Verbitsky.
\newblock Cohomology of compact hyper-{K}\"ahler manifolds and its
  applications.
\newblock {\em Geom. Funct. Anal.}, 6(4):601--611, 1996.

\end{thebibliography}
\bibliographystyle{plain}
\end{document}